\title[Vanishing theorems for some simple Shimura varieties]
{Vanishing theorems for the mod $p$ cohomology of some simple Shimura varieties}
\author{Teruhisa Koshikawa}
\address{Research Institute for Mathematical Sciences, Kyoto University}
\email{teruhisa@kurims.kyoto-u.ac.jp}
\theoremstyle{plain}
\newtheorem{thm}{Theorem}[section]
\newtheorem{lem}[thm]{Lemma}
\newtheorem{prop}[thm]{Proposition}
\theoremstyle{definition}
\newtheorem{rem}[thm]{Remark}
\newtheorem{exam}[thm]{Example}
\newcommand\fA{\mathfrak A}
\newcommand\fm{\mathfrak m}
\newcommand\fX{\mathfrak X}
\newcommand\bA{\mathbb A}
\newcommand\bC{\mathbb C}
\newcommand\bT{\mathbb T}
\newcommand\bF{\mathbf F}
\newcommand\bQ{\mathbf Q}
\newcommand\bZ{\mathbf Z}
\newcommand\cK{\mathcal K}
\newcommand\cO{\mathcal O}
\newcommand{\ra}{\rightarrow}
\newcommand{\ov}{\overline}
\newcommand{\wt}{\widetilde}
\DeclareMathOperator{\diag}{diag}
\DeclareMathOperator{\GL}{GL}		
\DeclareMathOperator{\SL}{SL}		
\DeclareMathOperator{\Frob}{Frob}
\DeclareMathOperator{\End}{End}
\DeclareMathOperator{\Gal}{Gal}
\DeclareMathOperator{\Spl}{Spl}
\DeclareMathOperator{\Sym}{Sym}
\DeclareMathOperator{\ur}{ur}
\newcommand{\et}{\mathrm{\acute{e}t}}	
\begin{document}

\begin{abstract}
We show that the mod $p$ cohomology of a simple Shimura variety treated in Harris-Taylor's book vanishes outside a certain nontrivial range after localizing at any non-Eisenstein ideal of the Hecke algebra. In cases of low dimensions, we show the vanishing outside the middle degree under a mild additional assumption.  
\end{abstract}

\maketitle

\section{Introduction}

Let $F$ be a CM field that contains an imaginary quadratic field $\cK$. 
Let $G$ be a similitude unitary group that is associated with a division algebra $B$ with the center $F$ of dimension $n^2$ and an involution of the second kind, so that it gives rise to Kottwitz's simple Shimura variety $X_K$ for a fixed sufficiently small level $K$ defined over the reflex field $E$. 

Fix a prime number $p$.
Let $\fm$ be a system of Hecke eigenvalues appearing in $H^i_{\et} (X_{K, \overline{E}}, \overline{\bF}_p)$ for some $i$. 
Caraiani and Scholze \cite{CS} constructed a semisimple Galois representation
\[
\rho_{\fm}\colon \Gamma_{F}\coloneqq\Gal(\ov{F}/F) \to \GL_n (\overline{\bF}_p)
\]
associated with $\fm$. (Our normalization of $\rho_{\fm}$ is ``geometric".) Their proof also provides a character $\chi_{\fm}\colon \Gamma_{\cK} \to \overline{\bF}_p^\times$ corresponding to the similitude factor; see the main text.

Assume that the signature of $G$ is $(0,n)$ except at one infinite place $\tau_0\colon F\to \bC$. 
Let $\ell\neq p$ be a prime such that everything is unramified at $\ell$ and $\ell$ splits over $\cK$. 
Let $v$ be a finite place of $F$ dividing $\ell\neq p$, and fix an embedding $\Gamma_{F_v}\to \Gamma_F$. In particular, the restriction of $\rho_{\fm}$ to $\Gamma_{F_v}$ is unramified. 
All lifts of geometric Frobenius $\Frob_v$ at $v$ have the same image under $\rho_{\fm}$, and we write $\rho_{\fm}(\Frob_v)$ for the image by abuse of notation. Moreover, the conjugacy class of $\rho_{\fm}(\Frob_v)$ is independent of the choice of $\Gamma_{F_v}\to \Gamma_F$. 
Let $\alpha_{v, 1}, \dots, \alpha_{v, n}$ be the set of generalized\footnote{This means we count usual eigenvalues with multiplicities being the dimensions of the corresponding generalized eigenspaces.}  eigenvalues of $\rho_{\fm}(\Frob_v)$. 
We say that $\rho_{\fm}$ is generic at $v$ if $\alpha_{v, i} \neq q_v \alpha_{v, j}$ for $i\neq j$, where $q_v$ is the size of the residue field of $v$. 

The main result of Caraiani-Scholze's work \cite{CS} in this setting is the following vanishing theorem:

\begin{thm}[\cite{CS}*{1.5, 6.3.3}]
If $\rho_{\fm}$ is generic at some $v$, then $i=\dim X_K$. Namely, $H^*_{\et}(X_{K, \ov{E}}, \ov{\bF}_p)_{\fm}$ vanishes outside the middle degree. 
\end{thm}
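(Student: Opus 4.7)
The plan is to follow the strategy pioneered by Caraiani-Scholze, analyzing the mod $p$ cohomology of $X_K$ via the Hodge-Tate period map at infinite level. The first step is to pass to Scholze's perfectoid Shimura variety $\mathcal{X}_{K^p}$, which comes equipped with a $G(\bQ_p)$-equivariant Hodge-Tate period map
\[
\pi_{HT}\colon \mathcal{X}_{K^p}\to \mathcal{F}\ell_{G,\mu}
\]
to the flag variety attached to the Hodge cocharacter $\mu$. Scholze's comparison identifies $H^i_{\et}(X_{K,\ov{E}}, \ov{\bF}_p)$ with the $K_p$-invariants of $H^i_{\et}(\mathcal{X}_{K^p,\ov{E}},\ov{\bF}_p)$, which is in turn computed by $H^i(\mathcal{F}\ell_{G,\mu}, R\pi_{HT,*}\ov{\bF}_p)$.

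Next, I would exploit the Newton stratification $\mathcal{F}\ell_{G,\mu}=\bigsqcup_{b\in B(G,\mu)}\mathcal{F}\ell^b$. Over each stratum, the fibers of $\pi_{HT}$ are described, via a perfectoid incarnation of Mantovan's product formula, in terms of Igusa varieties $\mathrm{Ig}^b$ together with local Shimura varieties. The contribution of $\mathcal{F}\ell^b$ to $H^*_{\et}(X_{K,\ov{E}}, \ov{\bF}_p)_{\fm}$ is thereby controlled by $H^*_c(\mathrm{Ig}^b, \ov{\bF}_p)_{\fm}$. For non-basic $b$, a mod $p$ analogue of Shin's computation expresses these Igusa cohomology groups in terms of parabolic inductions from proper parabolic subgroups $P\subsetneq G$.

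The genericness hypothesis now rules out all such non-basic contributions: if $\fm$ occurred in such a parabolic induction, then the Satake parameters of $\rho_{\fm}$ at the split place $v$ would have to arrange as a concatenation of Satake parameters of proper Levi factors, each shifted by an appropriate half-sum-of-roots twist, which forces a coincidence $\alpha_{v,i}=q_v\alpha_{v,j}$ for some $i\neq j$ and contradicts genericness at $v$. After localizing at $\fm$, only the contribution from the basic stratum $\mathcal{F}\ell^{b_0}$ survives; invoking the Caraiani-Scholze geometric input that $\pi_{HT}^{-1}(\mathcal{F}\ell^{b_0})$ has a cohomological dimension bound forcing concentration of the basic contribution in degree $\dim X_K$ then closes the argument.

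The main obstacle I anticipate is transferring Shin's parabolic induction formula from $\ov{\bQ}_\ell$- to $\ov{\bF}_p$-coefficients, which requires an integral version of Mantovan's fibration together with a mod $p$ analysis of cuspidal support. A related difficulty is making the combinatorial correspondence between $B(G,\mu)$ and proper Levi subgroups sufficiently explicit to verify that every non-basic stratum produces precisely the Frobenius coincidence that genericness forbids, uniformly in $b$.
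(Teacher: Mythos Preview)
This theorem is not proved in the paper at all; it is quoted with a citation to Caraiani--Scholze \cite{CS}, accompanied only by the remark that their hypothesis ``decomposed generic'' can be weakened to ``generic'' via the argument of \cite{CS2}. There is therefore no in-paper proof to compare your sketch against; the paper treats this result as a black box and instead builds on Boyer's refinement (Theorem~\ref{Key}).

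That said, your outline is a fair high-level summary of the Caraiani--Scholze method itself. Two points of imprecision are worth flagging. First, the elimination of non-basic strata in \cite{CS} does not go through a mod $p$ version of Shin's trace-formula computation; rather, for non-basic $b$ the group $J_b$ is a proper Levi of $G$, and \cite{CS} identifies $\mathrm{Ig}^b$ (up to central modifications) with an Igusa variety for that Levi, so that any Hecke eigensystem appearing in $H^*_c(\mathrm{Ig}^b,\ov{\bF}_p)$ arises from a Levi and produces the forbidden coincidence $\alpha_{v,i}=q_v\alpha_{v,j}$. Your anticipated obstacle of porting Shin's formula to $\ov{\bF}_p$-coefficients is therefore not the obstacle \cite{CS} actually faces. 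Second, the concentration in the middle degree is not obtained from a cohomological-dimension bound on $\pi_{HT}^{-1}(\mathcal{F}\ell^{b_0})$, but from the (unconditional) perversity of $R\pi_{HT*}\ov{\bF}_p[\dim X_K]$: once genericity forces the support onto the zero-dimensional basic stratum, perversity pins the complex to a single degree.
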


\begin{rem}
In \cite{CS}, $\rho_{\fm}$ is assumed to be decomposed generic at $v$; a slightly stronger condition than being generic. But their proof can be modified easily to cover generic ones; see also \cite{CS2}, especially the proof of Corollary 5.1.3. 
\end{rem}

Now, assume the signature at $\tau_0$ is $(1, n-1)$; this is essentially the Harris-Taylor case \cite{HT}. (There are some technical additional assumptions in \cite{HT}.) So, the reflex field equals $F$ and $\dim X_K=n-1$. 
In the Harris-Taylor case, the above vanishing theorem is also proved in \cite{Boyer}*{4.7} by a different argument. 
In fact, he proved the following stronger result. 
Note first that the Galois action on $H^i_{\et} (X_{K, \overline{F}}, \overline{\bF}_p)_{\fm}\otimes \chi_{\fm}$ is unramified at $v$.  

\begin{thm}[\cite{Boyer}]\label{Key}
If $\alpha$ is an eigenvalue of $\Frob_v$ acting on the cohomology $H^i_{\et} (X_{K, \overline{F}}, \overline{\bF}_p)_{\fm}\otimes \chi_{\fm}$, then the multiset
\[
\alpha, \quad q_v \alpha, \dots, \quad q_v^{(n-1)-i}\alpha
\]
is a subset of the multiset $\{ \alpha_{v,1}, \dots, \alpha_{v, n}\}$ of generalized eigenvalues of $\rho_{\fm}(\Frob_v)$.  
\end{thm}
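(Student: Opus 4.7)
Since $v$ lies above $\ell\neq p$ and the data are unramified at $\ell$, the Shimura variety $X_K$ extends to a smooth proper integral model $\cX_K/\cO_{F_v}$. By smooth and proper base change, $H^i_{\et}(X_{K,\ov F},\ov\bF_p)\cong H^i(\cX_{K,\bar s},\ov\bF_p)$ as $\Frob_v$-modules. The plan is to analyze the right-hand side using the Newton stratification $\cX_{K,\bar s}=\bigsqcup_{h=0}^{n-1}\cX^{=h}$ indexed by the étale height $h$ of the $v$-divisible group with its PEL structure; here $\cX^{=h}$ is smooth of dimension $h$, and the formal completion of $\cX_{K,\bar s}$ along the Barsotti--Tate group of a geometric point of $\cX^{=h}$ is controlled by the height-$(n-h)$ Lubin--Tate tower.

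I would assemble the excision triangles $j_!j^*\to \mathrm{id}\to i_*i^*$ for the open/closed decompositions $\cX^{=h}\hookrightarrow\cX^{\geq h}\hookleftarrow\cX^{>h}$ into a filtration on $R\Gamma(\cX_{K,\bar s},\ov\bF_p)$, whose graded pieces are cohomologies of Igusa varieties with coefficients in local systems coming from the Lubin--Tate fibration. On the Lubin--Tate factor of height $n-h$, each Hecke-isotypic piece of the cohomology produces Frobenius eigenvalues forming a Speh-type arithmetic progression $\beta,q_v\beta,\dots,q_v^{n-h-1}\beta$ of length $n-h$; this is the integral counterpart of the Harris--Taylor/Mantovan calculation. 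Combined with the dimension bound $\dim\cX^{=h}=h$, which forces contributions to $H^i$ to come from strata with $h\leq i$, one obtains that every Frobenius eigenvalue $\alpha$ occurring in $H^i$ sits in a Speh block of length at least $n-h\geq n-i$, and a degree-by-degree analysis pins down that the tail of length $n-i$ starting at $\alpha$ appears inside such a block.

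The final step is to identify the union of these Speh blocks with the multiset of generalized eigenvalues of $\rho_\fm(\Frob_v)$; this is built into the construction of $\rho_\fm$ in \cite{CS} via local-global compatibility at unramified split places. The main obstacle will be the passage from characteristic $0$ to characteristic $p$: with $\ov\bQ_p$-coefficients the above picture is classical and cohomology decomposes semisimply over Hecke eigensystems, but with $\ov\bF_p$-coefficients one must control torsion in the spectral sequence, allow Frobenius eigenvalues to collide mod $p$ (so the Speh blocks must be read as submultisets rather than sets), and verify that the stratification-induced filtration and the Igusa-variety calculations admit an integral lift whose reduction preserves the block structure. This fine analysis of the perverse filtration on Harris--Taylor local systems in an integral setting is where I expect the principal technical effort to lie.
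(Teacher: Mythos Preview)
The paper does not give a self-contained proof of this theorem; it is attributed to Boyer, with the remark that it follows from an argument along the lines of \cite{Boyer}*{4.14} by working at the \emph{extremal} nonvanishing degree (the greatest $i'$ with $H^{(n-1)-i'}_{\et}(X_{K,\ov F},\ov\bF_p)_{\fm}\neq 0$, together with the dual statement for $\fm^\vee$), and that it can alternatively be obtained by the method of \cite{CS} in forthcoming work.

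Your geometric framework --- Newton stratification of the special fiber, Lubin--Tate towers along strata, Igusa varieties, and Speh-type progressions of Frobenius eigenvalues --- is exactly Boyer's setup, and you correctly flag the passage to $\ov\bF_p$-coefficients as the crux.

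The gap lies in the step ``the dimension bound \dots\ forces contributions to $H^i$ to come from strata with $h\leq i$, \dots\ and a degree-by-degree analysis pins down that the tail of length $n-i$ starting at $\alpha$ appears''. First, the bound $h\le i$ requires affineness of the Newton strata (so that Artin vanishing controls $H^*_c$); this holds in the Harris--Taylor signature but must be invoked explicitly. Second, and this is the real obstruction, the stratification filtration over $\ov\bZ_p$ has nontrivial extensions, so an eigenvalue $\alpha$ appearing in $H^i(\ov\bF_p)_{\fm}$ may arise from the reduction of a torsion class rather than from the free part of a single graded piece. You then have no direct way to identify which Speh block contains $\alpha$, nor to argue that the remaining members $q_v\alpha,\dots,q_v^{(n-1)-i}\alpha$ of that block lie among the eigenvalues of $\rho_{\fm}(\Frob_v)$ --- remember that the latter multiset is fixed by the Hecke eigenvalues at $v$, not read off from the cohomology in degree $i$. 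Boyer's actual argument, as the paper's hint indicates, bypasses this by first treating the extremal degree, where one side of the relevant long exact sequences vanishes and the mod~$p$ eigenvalues embed into the reduction of a characteristic-zero piece whose Speh structure and compatibility with $\rho_{\fm}$ are already known. Your ``degree-by-degree analysis'' would have to substitute for precisely this reduction, and as written it does not.
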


\begin{rem}\label{finite extension}
There is an immediate variant using a finite extension of $F$: let $F'$ be a finite extension of $F$, and let $v'$ be a finite place of $F'$ above $v$. Then, the theorem holds with $\Frob_{v}$ and $q_v$ replaced by $\Frob_{v'}$ and $q_{v'}$. Indeed, it follows from the theorem since $\Frob_{v'}=\Frob_v^{[k_{v'}:k_v]}$ and $q_{v'}=q_v^{[k_{v'}:k_v]}$. 

Later, the theorem or the variant above is used in the following way, combined with the Chebotarev density: let $g$ be an element of $\rho_{\fm}(\Gamma_{F'})$. Then, there exist infinitely many finite places $v'$ (resp. $v$) of $F'$ (resp. $F$) to which the variant can be applied and the conjugacy class of $g$ equals that of $\rho_{\fm}(\Frob_{v'})$. Moreover, we can make $q_{v'}=p_{v'}=v'|_{\bQ}$ as only such $v'$ contribute to nonzero Dirichelet density (actually this makes $p_{v'}=p_v\coloneqq v|_{\bQ}$ split in $\cK$, and we can apply the results above). 

Finally, note that if $F'$ contains $F(\zeta_p)$, then $q_{v'}\equiv 1 \mod p$ for $v'$ prime to $p$ and the statement of the variant simplifies. This simplification is very useful and will be used frequently.  
\end{rem}

\begin{rem}
Theorem \ref{Key} is actually not clearly stated in \cite{Boyer} but follows from an argument along the line of \cite{Boyer}*{4.14} by considering the greatest integer $i'\geq 0$ such that $H^{(n-1)-i'}_{\et}(X_{K, \ov{F}}, \ov{\bF}_p)_{\fm} \neq 0$ or (in fact, and, a posteriori) $H^{(n-1)-i'}_{\et}(X_{K, \ov{F}}, \ov{\bF}_p)_{\fm^{\vee}} \neq 0$, where $\fm^{\vee}$ is the ``dual" of $\fm$. 
It can be proved using the method of \cite{CS} as well. This will be discussed in a forthcoming article of the author. 
\end{rem}

\begin{rem}
Assume that $\ell$ splits completely in $F$. 
Then, any eigenvalue of Frobenius at $v$ acting on $H^i_{\et} (X_K, \overline{\bF}_p)_{\fm}\otimes \chi_{\fm}$ is a Frobenius eigenvalue of $\rho_{\fm}|_{\Gamma_{F_v}}$ by Wedhorn's congruence relation \cite{Wedhorn} and our normalization of $\rho_{\fm}$.

As a part of a mod $p$ analogue of the Arthur-Kottwitz conjecture, one would consider hypothetical Lefschetz operators $H^{j}_{\et}(X_{K, \ov{F}}, \overline{\bF}_p)_{\fm} \ra H^{j+2}_{\et}(X_{K, \ov{F}}, \overline{\bF}_p)_{\fm}(1)$ inducing isomorphisms $H^{(n-1)-i}_{\et}(X_{K, \ov{F}}, \overline{\bF}_p)_{\fm} \cong H^{(n-1)+i}_{\et}(X_{K, \ov{F}}, \overline{\bF}_p)_{\fm}(i)$. 
This would imply that each $\alpha, \dots, \ell^i\alpha$ is a Frobenius eigenvalue of $\rho_{\fm}$. Theorem \ref{Key} is stronger actually, and gives information of multiplicities; this may be also regarded as a part of the mod $p$ analogue of the Arthur-Kottwitz conjecture. 
\end{rem}

The main result of this note, which is deduced from Boyer's result, is the following:

\begin{thm}\label{general}
Let $X_K$ be Harris-Taylor's Shimura variety of dimension $n-1$~\cite{HT}. Let $\fm$ be a maximal ideal of the Hecke algebra contributing to the cohomology of $X_K$, and $\rho_{\fm}\colon \Gamma_F \ra \GL_n(\ov{\bF}_p)$ the associated Galois representation. If $\rho_{\fm}$ is irreducible, then $H^j_{\et}(X_{K, \ov{F}}, \overline{\bF}_p)_{\fm}$ vanishes for $j < n/2$ and $j > 2(n-1)- n/2$.      
\end{thm}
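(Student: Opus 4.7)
The plan is to combine Theorem \ref{Key} with Chebotarev density to derive a strong multiplicity constraint on the Frobenius eigenvalues of $\rho_{\fm}$, and then to argue that this constraint is incompatible with irreducibility.

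First, I use the variant in Remark \ref{finite extension} to replace $F$ with $F' = F(\zeta_p)$, over which $q_{v'} \equiv 1 \pmod{p}$ for every unramified $v'$; the chain $\alpha, q_{v'}\alpha, \ldots, q_{v'}^{n-1-i}\alpha$ of Theorem \ref{Key} then collapses to the single eigenvalue $\alpha$ of generalized algebraic multiplicity $n-i$. Suppose for contradiction that $H^i_{\et}(X_{K,\ov{F}}, \ov{\bF}_p)_{\fm} \neq 0$ for some $i < n/2$. For each unramified $v'$, $\Frob_{v'}$ has at least one eigenvalue on the nonzero space $H^i_{\fm} \otimes \chi_{\fm}$, which by Theorem \ref{Key} occurs with generalized multiplicity at least $n-i > n/2$ among the eigenvalues of $\rho_{\fm}(\Frob_{v'})$. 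Chebotarev density then yields: every $g \in \rho_{\fm}(\Gamma_{F'})$ admits a generalized eigenvalue of algebraic multiplicity greater than $n/2$.

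Next, I would reduce to an irreducible subcase via Clifford theory and a Zariski-closure/pigeonhole argument. Since $\rho_{\fm}$ is $\Gamma_F$-irreducible and $\Gamma_{F'}$ is normal of finite index in $\Gamma_F$, $\rho_{\fm}|_{\Gamma_{F'}}$ is semisimple with isotypic components $V_1, \ldots, V_s$ transitively permuted by $\Gal(F'/F)$. For each $g \in \rho_{\fm}(\Gamma_{F'})$, the multiplicity condition $>n/2$ forces, by pigeonhole, some isotypic component $V_{j(g)}$ to receive a generalized eigenvalue of $g$ of multiplicity $>(\dim V_{j(g)})/2$; since this per-component condition is Zariski closed and the connected identity component $\bar H^0$ of the Zariski closure $\bar H = \overline{\rho_{\fm}(\Gamma_{F'})}$ is irreducible, there is a fixed index $j_0$ such that every element of $\bar H^0$ satisfies this property on $V_{j_0}$. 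Passing to the finite Galois extension $F''/F$ corresponding to $\rho_{\fm}(\Gamma_{F'}) \cap \bar H^0$ and iterating the Clifford reduction on $V_{j_0}$, I am reduced to the situation where $W$ is an irreducible $\Gamma_{F''}$-representation of dimension $d$ on which every element of the image admits a generalized eigenvalue of algebraic multiplicity $> d/2$.

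The key structural lemma is that no absolutely irreducible subgroup of $\GL_d(\ov{\bF}_p)$ with $d \geq 2$ has this property. I would prove it by passing to the Zariski closure (still irreducible and still satisfying the condition, which is Zariski closed), whose identity component is connected reductive (no nontrivial normal unipotent subgroup, by Lie--Kolchin and irreducibility), and analyzing weights of a maximal torus: some weight of $W$ must have multiplicity $>d/2$, and by Weyl group invariance of weight multiplicities in an irreducible representation, this weight must be Weyl-invariant, hence extends to a character of the identity component; twisting by this character produces a proper stable subspace and contradicts irreducibility. The main obstacle is this structural lemma, where positive-characteristic phenomena (failure of complete reducibility for reductive representations, Frobenius twists) demand extra care; a direct induction on $d$ starting from the $d = 2$ case (in which the condition forces every element to be a scalar plus a nilpotent, and two such elements with distinct eigenlines produce under multiplication an element with distinct eigenvalues, so a common eigenline must exist) may provide a more elementary route.

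Once the lemma is established, $d = 1$, so $\rho_{\fm}|_{\Gamma_{F''}}$ acts by a character on the relevant isotypic summand; the projective image of $\rho_{\fm}$ then factors through the finite cyclic group $\Gal(F''/F)$, and any absolutely irreducible projective representation of a cyclic group over $\ov{\bF}_p$ is one-dimensional, forcing $n = 1$ and contradicting $n \geq 2$. Finally, the symmetric upper bound $j > 2(n-1) - n/2$ follows from the same argument applied to the dual Hecke maximal ideal $\fm^{\vee}$, for which $\rho_{\fm^{\vee}} \cong \rho_{\fm}^{\vee}$ is also irreducible, combined with Poincar\'e duality for the smooth proper variety $X_K$ of dimension $n-1$.
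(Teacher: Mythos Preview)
Your argument has a structural gap: the image $\rho_{\fm}(\Gamma_{F'})$ is a \emph{finite} subgroup of $\GL_n(\ov{\bF}_p)$, so its Zariski closure is itself and the identity component $\bar H^{0}$ is the trivial group. Every step that appeals to $\bar H^{0}$ therefore collapses---both the Clifford-theory step (fixing a single index $j_0$) and the proof of your structural lemma (maximal tori, weights, reductivity). Your structural lemma, that no irreducible finite subgroup of $\GL_d(\ov{\bF}_p)$ with $d\ge 2$ can have every element admitting a generalized eigenvalue of multiplicity $>d/2$, is not proved; note in particular that the Guralnick--Malle statement only controls the multiplicity of the \emph{fixed} eigenvalue $1$, not of an eigenvalue allowed to vary with the element, so it does not yield your lemma directly. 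The $d=2$ induction base you sketch is also incomplete (two scalar-times-unipotent matrices with different eigenlines need not multiply to a matrix with distinct eigenvalues), and the final step asserting that the projective image factors through a \emph{cyclic} group $\Gal(F''/F)$ is unjustified.

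The missing idea is to extract more from Theorem~\ref{Key} than a single high-multiplicity eigenvalue. The paper fixes an irreducible $\Gamma_F$-constituent $\rho$ of $H^{i}_{\et}(X_{K,\ov F},\ov{\bF}_p)_{\fm}\otimes\chi_{\fm}$ and observes that \emph{every} eigenvalue of $\rho(g)$, for $g\in\Gamma_{F(\zeta_p)}$, occurs in $\rho_{\fm}(g)$ with multiplicity $\ge n-i>n/2$. If $\dim\rho\ge 2$, a short argument (ruling out that $\rho|_{\Gamma_{F(\zeta_p)}}$ is unipotent modulo scalars) produces $g$ with two distinct eigenvalues on $\rho$, forcing $n\ge 2(n-i)>n$; no group theory beyond this count is needed. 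If $\rho=\chi$ is a character, twisting gives that $(\rho_{\fm}\otimes\chi^{-1})(g)$ has the \emph{fixed} eigenvalue $1$ with multiplicity $>n/2$ for every $g$ in the noncentral normal subgroup $(\rho_{\fm}\otimes\chi^{-1})(\Gamma_{F(\zeta_p)})$, and now Guralnick--Malle (Theorem~\ref{GM:normal}) applies verbatim to the irreducible group $(\rho_{\fm}\otimes\chi^{-1})(\Gamma_F)$. Your treatment of the range $j>2(n-1)-n/2$ via Poincar\'e duality and $\fm^{\vee}$ matches the paper.
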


In particular, the cohomology localized at $\fm$ vanishes outside the middle degree if $\rho_{\fm}$ is irreducible and $n \leq 3$. While the case $n=4$ is difficult as $n$ is no longer prime, we can push the argument further if $n=5$:

\begin{thm}\label{small}
Suppose $n = 5$ and $\rho_{\fm}$ is irreducible. Then, $H^*_{\et}(X_{K, \ov{F}}, \ov{\bF}_p)_{\fm}$ vanishes outside the middle degree, except possibly when $p=5$ and every minimal noncentral normal subgroup of $\rho_{\fm}(\Gamma_{F(\zeta_p)})$ is an elementary abelian $r$-group with $r=2$ or $3$.  
\end{thm}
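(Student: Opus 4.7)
The plan is to combine Theorem~\ref{Key} with a group-theoretic analysis of $G\coloneqq\rho_{\fm}(\Gamma_{F(\zeta_p)})\subseteq\GL_5(\ov{\bF}_p)$. Theorem~\ref{general} applied with $n=5$ forces $H^j_{\et}(X_{K,\ov F},\ov{\bF}_p)_{\fm}=0$ for $j\le 2$ and $j\ge 6$, leaving only $j\in\{3,4,5\}$. Poincar\'e duality identifies $H^j_{\fm}$ with a Tate twist of the dual of $H^{2(n-1)-j}_{\fm^{\vee}}$, and since $\rho_{\fm^{\vee}}$ is essentially the contragredient of $\rho_{\fm}$ and hence still irreducible, it suffices to prove $H^3_{\fm}=0$.

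Assuming $H^3_{\fm}\neq 0$, I would apply Theorem~\ref{Key} with $i=3$: at each good place $v$, some eigenvalue $\alpha$ of $\Frob_v$ on $H^3_{\fm}\otimes\chi_{\fm}$ satisfies $\{\alpha,q_v\alpha\}\subseteq\{\alpha_{v,1},\dots,\alpha_{v,5}\}$. Taking $F'=F(\zeta_p)$ in Remark~\ref{finite extension} so that $q_{v'}\equiv 1\pmod p$, and invoking Chebotarev density, one obtains the key group-theoretic input: every element of $G$ has a repeated generalized eigenvalue, i.e.\ no element of $G$ is regular semisimple in $\GL_5(\ov{\bF}_p)$.

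The remaining task is to classify such $G$. Since $5$ is prime and $[\rho_{\fm}(\Gamma_F):G]$ divides $p-1$, Clifford theory gives either that $V\coloneqq\ov{\bF}_p^5$ is irreducible as a $G$-module, or that $V|_G=\bigoplus_{i=1}^5\chi_i$ splits into five distinct characters forming a single $\rho_{\fm}(\Gamma_F)/G$-orbit (which forces $5\mid p-1$, a case one disposes of directly). In the irreducible case I would take a minimal noncentral normal subgroup $N\trianglelefteq G$. If $N$ is nonabelian, a case-by-case inspection of the nonabelian simple groups admitting a faithful irreducible $5$-dimensional representation in characteristic~$p$ eliminates each candidate. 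If $N$ is abelian, minimality forces $N$ elementary abelian of exponent some prime $r\ne p$ (the case $r=p$ is excluded because $N$ would act unipotently, giving $V^N=V$ by irreducibility and $N=1$). Clifford then gives $V|_N=\bigoplus_{i=1}^5\chi_i$ with five distinct characters permuted transitively by $G/C_G(N)\hookrightarrow S_5$, so $G/C_G(N)$ contains a $5$-cycle. Lifting it to some $g\in G$, the element $g$ is monomial with a single $5$-cycle block, hence its characteristic polynomial equals $x^5-\lambda$ for some $\lambda\in\ov{\bF}_p^{\times}$. For $p\ne 5$ this polynomial has five distinct roots, so $g$ is regular semisimple --- a contradiction that handles every characteristic $p\ne 5$. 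For $p=5$, however, $x^5-\lambda=(x-\lambda^{1/5})^5$ has a single eigenvalue, and no contradiction arises from this single element; here a finer covering argument --- the $\binom{5}{2}=10$ proper subgroups $\ker(\chi_i\chi_j^{-1})$ cannot cover $N=\bF_r^k$ once $r$ is sufficiently large --- rules out $r\ge 7$, leaving only $r\in\{2,3\}$ unresolved, exactly the stated exception.

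The main obstacle will be the group-theoretic case analysis in characteristic~$5$: the nonabelian simple subcase demands care (for instance the Steinberg representation of $\PSL_2(\bF_5)$ in characteristic~$5$ has no regular semisimple element, and must be excluded by exploiting Boyer's theorem also at places with $q_v\not\equiv 1\pmod p$), and the abelian covering argument for $r\ge 7$ requires a detailed accounting of the possible configurations of the five characters under the $G/C_G(N)$-action.
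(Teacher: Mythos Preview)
Your reduction to $j=3$ via Theorem~\ref{general} and Poincar\'e duality is fine, and the abelian--$N$ analysis (Clifford theory plus the $x^5-\lambda$ trick) matches the paper's \S4.2 for $p\neq 5$. However, there are two genuine gaps.

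\textbf{The quasi-simple case.} You extract from Theorem~\ref{Key} only the statement ``no element of $G$ is regular semisimple,'' and then propose a case-by-case elimination of simple groups with a $5$-dimensional faithful projective irreducible. This discards essential information. Theorem~\ref{Key} actually says that \emph{every} eigenvalue of $\Frob_{v'}$ on $H^3_{\fm}\otimes\chi_{\fm}$ (not just some eigenvalue) occurs with multiplicity $\geq 2$ among the eigenvalues of $\rho_{\fm}(\Frob_{v'})$. The paper exploits this by fixing an irreducible constituent $\rho$ of $H^3_{\fm}\otimes\chi_{\fm}$ (viewed as an $N$-module via \cite{EG}*{4.1.3}) and deducing $\dim(\rho_{\fm}\otimes\rho^{\vee})^{n_0}\geq 2\dim\rho$ for every $p'$-element $n_0\in N$. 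It then applies a theorem of Guralnick--Malle \cite{GM}*{1.7} producing a generating triple $x,y,z$ of $p'$-elements in $N/Z$ with $xyz=1$, and Scott's lemma forces $\dim(\rho_{\fm}\otimes\rho^{\vee})^{n_0}\leq (5\dim\rho)/3$ for one of them, a contradiction. This argument is uniform in the simple quotient $N/Z$ (only $(\mathfrak A_5,5)$ needs separate treatment, and there the paper again uses the constituent $\rho$: an order-$3$ element has eigenvalue $1$ on $\rho$, hence multiplicity $\geq 2$ on $\rho_{\fm}$, but the actual multiplicity is $1$). Your ``no regular semisimple'' constraint alone does not rule out, e.g., the $5$-dimensional $\mathfrak A_5$-module in characteristic $5$, and your suggested fallback of using places with $q_v\not\equiv 1\pmod p$ is neither worked out nor the route the paper takes.

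\textbf{The reducible-restriction case.} When $\rho_{\fm}|_{G}$ is a sum of characters you write ``a case one disposes of directly,'' but this is the paper's entire \S4.4 and is far from immediate. The $5$-cycle you would lift lies in $\rho_{\fm}(\Gamma_F)\setminus G$, so Theorem~\ref{regss} does not apply; one must again work with a constituent $\rho$ of $H^3_{\fm}\otimes\chi_{\fm}$, first rule out $\dim\rho=1$ by a delicate eigenvalue-multiplicity chase under the cyclic action of $g$, then show $\dim\rho=5$ forces each eigenvalue of every non-scalar $g'\in G$ to have multiplicity $2$ or $3$, and finally derive a contradiction. None of this follows from ``no regular semisimple element in $G$.'' A secondary point: your covering bound only gives $r\geq 11$ directly; the paper's treatment of $r=7$ and $r\equiv 4\pmod 5$ uses the explicit $\bZ/5\bZ$-module structure of $N\subset\bF_r^5$.
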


\begin{rem}
For instance, the above excludes a case where $p=5$ and $\rho_{\fm}(\Gamma_{F(\zeta_p)})$ is the semidirect product of $\{\textnormal{diag}(\pm 1, \dots, \pm 1)\}$ (of order $32$) and a permutation of order $5$. Thanks for the referee for pointing out an erroneous claim in the first version of the manuscript.  
\end{rem}

There are previous works in this direction including the works of Shin \cite{Shin:sc} (see also \cite{Fujiwara}), Emerton-Gee \cite{EG}, and Boyer \cite{Boyer}. The novelty here is that we only assume irreducibility of $\rho_{\fm}$. 
(Let us also mention that Boyer is claiming a rather strong vanishing result recently.)
For the proofs, we use Theorem \ref{Key} and also group-theoretic results from \cite{GM}, which, in full generality, rely on the classification of finite simple groups. 

It is easy to control $\rho_{\fm}$ with large image. Let us record the following remark. 
The argument passing to $F(\zeta_p)$ is very important throughout this note, and will be frequently used later as well. 

\begin{thm}\label{regss}
If the image $\rho_{\fm}(\Gamma_{F(\zeta_p)})$ of $\Gamma_{F(\zeta_p)}$ contains a regular semisimple element of $\GL_n(\ov{\bF}_p)$, then $H^*_{\et}(X_{K, \ov{F}}, \overline{\bF}_p)_{\fm}$ vanishes outside the middle degree. 
\end{thm}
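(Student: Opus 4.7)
The plan is to apply the variant of Theorem~\ref{Key} from Remark~\ref{finite extension} at a place $v'$ chosen so that $\rho_{\fm}(\Frob_{v'})$ is conjugate to the given regular semisimple element and $q_{v'}\equiv 1\pmod p$; the latter congruence collapses the multiset in Theorem~\ref{Key}, and the distinctness of the eigenvalues then forces the cohomology to concentrate in the middle degree.

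Let $g\in \rho_{\fm}(\Gamma_{F(\zeta_p)})$ be a regular semisimple element; since $\ov{\bF}_p$ is algebraically closed, $g$ has $n$ pairwise distinct eigenvalues. By the Chebotarev argument described in Remark~\ref{finite extension} (applied with $F'=F(\zeta_p)$), there exist infinitely many finite places $v'$ of $F(\zeta_p)$, to which the variant of Theorem~\ref{Key} applies, such that $p_{v'}$ splits in $\cK$, $q_{v'}=p_{v'}$, and the conjugacy class of $\rho_{\fm}(\Frob_{v'})$ equals that of $g$. Moreover $q_{v'}\equiv 1\pmod p$ because $\zeta_p\in F(\zeta_p)_{v'}$.

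Fix such a $v'$, and suppose $H^i_{\et}(X_{K,\ov F},\ov{\bF}_p)_{\fm}\neq 0$ for some $i$ with $0\leq i\leq n-1$. Then $\Frob_{v'}$ has some eigenvalue $\alpha$ on this space tensored with $\chi_{\fm}$, and the variant of Theorem~\ref{Key} asserts that the multiset $\{\alpha,q_{v'}\alpha,\dots,q_{v'}^{(n-1)-i}\alpha\}$, of cardinality $n-i$, is a submultiset of the generalized eigenvalues of $\rho_{\fm}(\Frob_{v'})$. Because $q_{v'}\equiv 1\pmod p$, this multiset is just $\alpha$ repeated $n-i$ times. However, the eigenvalues of $\rho_{\fm}(\Frob_{v'})$ coincide with those of $g$ and are pairwise distinct, so each of them has multiplicity one. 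Therefore $n-i\leq 1$, forcing $i=n-1$.

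For $i>n-1$, I would combine the preceding paragraph with Poincar\'e duality. Since $X_K$ is proper and smooth of dimension $n-1$, the cup product yields a perfect Hecke-equivariant pairing between $H^i_{\et}(X_{K,\ov F},\ov{\bF}_p)_{\fm}$ and $H^{2(n-1)-i}_{\et}(X_{K,\ov F},\ov{\bF}_p)_{\fm^{\vee}}$ (up to a Tate twist), where $\fm^{\vee}$ is the dual Hecke ideal appearing in the earlier remark. The associated Galois representation $\rho_{\fm^{\vee}}$ differs from $\rho_{\fm}^{\vee}$ only by a character twist, so $\rho_{\fm^{\vee}}(\Gamma_{F(\zeta_p)})$ also contains a regular semisimple element (namely, the image of $g^{-1}$, up to the twist). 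Applying the previous step to $\fm^{\vee}$ yields $H^j_{\fm^{\vee}}=0$ for $j<n-1$, and hence $H^i_{\fm}=0$ for $i>n-1$. The essential content here is the congruence $q_{v'}\equiv 1\pmod p$ obtained by passing to $F(\zeta_p)$; once this is set up, the combinatorics are immediate, and the only technicality is the invocation of Poincar\'e duality together with the standard description of $\fm^{\vee}$ and $\rho_{\fm^{\vee}}$.
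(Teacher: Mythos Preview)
Your argument is correct and follows the same approach as the paper: pass to $F(\zeta_p)$ via the Chebotarev argument of Remark~\ref{finite extension}, use $q_{v'}\equiv 1\pmod p$ to collapse the multiset in Theorem~\ref{Key}, and conclude from the distinctness of the eigenvalues of the regular semisimple element. The paper's proof is terser and does not spell out the Poincar\'e duality step for $i>n-1$ (it simply notes that no pair $\{\alpha,q_{v'}\alpha\}$ occurs and cites Remark~\ref{finite extension}); your explicit treatment via $\fm^{\vee}$ is a reasonable way to fill this in.
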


\begin{proof}
Let $g\in \rho_{\fm}(\Gamma_{F(\zeta_p)})$ denote a regular semisimple element of $\GL_n(\ov{\bF}_p)$. 
We shall apply Theorem \ref{Key}, Remark \ref{finite extension} to $g$. So, we choose a prime-to-$p$ finite place $v'$ of $F'=F(\zeta_p)$ as in Remark \ref{finite extension}. In particular, $g$ belongs to the conjugacy class of $\rho_{\fm}(\Frob_{v'})$. 
Hence, $\rho_{\fm}(\Frob_{v'})$ is also regular semisimple. 
Observe that the multiset of eigenvalues of $\rho_{\fm}(\Frob_{v'})$ does not contain a subset of the form of $\{ \alpha, q_{v'}\alpha\}$ as $q_{v'}\equiv 1\mod p$. Therefore, $H^*_{\et}(X_{K, \ov{F}}, \overline{\bF}_p)_{\fm}$ vanishes outside the middle degree by Remark \ref{finite extension}.   
\end{proof}

\begin{exam}
Suppose $n$ is an odd prime, $p>2n-3$, and the restriction of $\rho_{\fm}$ to $\Gamma_{F(\zeta_p)}$ is irreducible, i.e., $\rho_{\fm}$ is irreducible and not induced from a character. Then, \cite{GM}*{1.6} says that the image of $\Gamma_{F(\zeta_p)}$ contains a regular semisimple element. So, $H^*_{\et}(X_{K, \ov{F}}, \overline{\bF}_p)_{\fm}$ vanishes outside the middle degree by Theorem \ref{regss}. 
\end{exam}

\begin{exam}\label{Boyer}
Another example satisfying the assumption of Theorem \ref{regss} is the case where the image of $\Gamma_F$ contains $\SL_n(\bF_p)$. 
(Note that $\SL_n(\bF_p)$ contains a regular semisimple element.)
Indeed, if $(n, p)\neq (2, 2), (2, 3)$, $\SL_n(\bF_p)$ is perfect and contained in the image of $\Gamma_{F(\zeta_p)}$. If $p=2$, then $F=F(\zeta_p)$ and there is nothing to prove. If $p=3$, $[F(\zeta_p):F]$ divides $2$ and $\SL_2(\bF_3) $ does not have a subgroup of index $2$, so it is contained in the image of $\Gamma_{F(\zeta_p)}$. 
\end{exam}

We also remark that Theorem \ref{Key} implies the following: 

\begin{prop}\label{induced from character}
Suppose
\begin{itemize}
    \item $\rho_{\fm}$ is irreducible and induced from a character of $\Gamma_E$ for some cyclic extension $F\subset E$ of degree $n$ contained in $F(\zeta_p)$, and 
    \item $[F(\zeta_p)\colon F]> n$.
\end{itemize} 
Then, $H^*_{\et}(X_{K, \ov{F}}, \overline{\bF}_p)_{\fm}$ vanishes outside the middle degree. 
\end{prop}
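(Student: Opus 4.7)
The plan is to produce, via Chebotarev density, a Frobenius class in $\rho_{\fm}(\Gamma_F)$ whose cyclotomic image has order strictly larger than $n$, so that no length-$\geq 2$ geometric progression with ratio $q_v$ can sit inside its eigenvalue multiset. Note that this cannot be done inside $\rho_{\fm}(\Gamma_{F(\zeta_p)})$: if $\chi|_{\Gamma_{F(\zeta_p)}}$ happens to be $\Gal(E/F)$-invariant (which can happen), then $\rho_{\fm}(\Gamma_{F(\zeta_p)})$ consists only of scalar matrices, so Theorem~\ref{regss} is not directly available. Instead I would take $g \in \Gamma_F$ projecting to a generator of the large cyclic quotient $\Gal(F(\zeta_p)/F)$.

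Set $m := [F(\zeta_p):F]$, so $n \mid m$, $m > n$, and in particular $n \mid p-1$, hence $p \nmid n$. Choose $g \in \Gamma_F$ whose image in $\Gal(F(\zeta_p)/F) \cong \bZ/m$ is a generator; since any $k \in (\bZ/m)^\times$ satisfies $\gcd(k, n) = 1$, the image of $g$ in the quotient $\Gal(E/F) \cong \bZ/n$ is also a generator, so $g^n \in \Gamma_E$ and $e, g, \ldots, g^{n-1}$ is a system of coset representatives for $\Gamma_F/\Gamma_E$. In the standard basis of $\rho_{\fm} = \mathrm{Ind}_{\Gamma_E}^{\Gamma_F}\chi$ attached to these representatives, the matrix $\rho_{\fm}(g)$ cyclically shifts the basis vectors with a single scalar $\chi(g^n)$ appearing once per cycle, so $\rho_{\fm}(g)^n = \chi(g^n)\cdot I$. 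Because $p \nmid n$, the scalar $\chi(g^n) \in \ov\bF_p^\times$ admits $n$ distinct $n$-th roots in $\ov\bF_p$, and these are precisely the eigenvalues of $\rho_{\fm}(g)$. In particular, every pairwise ratio of these eigenvalues is an $n$-th root of unity.

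Applying Chebotarev density in the compositum of the fields cut out by $\rho_{\fm}$ and by $F(\zeta_p)$ (the joint control of these two is the only mildly delicate point of the plan), I obtain infinitely many good finite places $v$ of $F$ with $\rho_{\fm}(\Frob_v)$ conjugate to $\rho_{\fm}(g)$ and with $\Frob_v$ having the same image as $g$ in $\Gal(F(\zeta_p)/F) \hookrightarrow (\bZ/p)^\times$. For such $v$, $q_v \bmod p$ has order $m$ in $(\bZ/p)^\times$; since $m > n$, $q_v$ is not an $n$-th root of unity in $\ov\bF_p^\times$, so $q_v\alpha$ is never an eigenvalue of $\rho_{\fm}(\Frob_v)$ when $\alpha$ is. Therefore no multiset $\{\alpha, q_v\alpha, \ldots, q_v^{n-1-i}\alpha\}$ of size $n-i \geq 2$ can be a submultiset of the eigenvalue multiset of $\rho_{\fm}(\Frob_v)$, and Theorem~\ref{Key} forces $H^i_{\et}(X_{K,\ov F}, \ov\bF_p)_{\fm} = 0$ for every $i < n-1$.

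For $j > n-1$, the parallel argument applied to the dual Hecke ideal $\fm^\vee$ (whose associated Galois representation is again irreducible and induced from a character of $\Gamma_E$, namely $\chi^{-1}$ up to twist by the restriction of a character of $\Gamma_F$) yields $H^i_{\fm^\vee} = 0$ for $i < n-1$, and Poincar\'e duality on the smooth proper Shimura variety $X_K$ then transports this to $H^j_{\fm} = 0$ for $j > n-1$.
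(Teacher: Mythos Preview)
Your argument is correct and follows essentially the same route as the paper's proof: both choose an element of $\Gamma_F$ projecting to a generator of $\Gal(F(\zeta_p)/F)$, observe that its image under $\rho_{\fm}$ has characteristic polynomial $X^n-c$ (equivalently, its eigenvalues are the $n$ distinct $n$-th roots of a scalar), realize this element as a Frobenius via Chebotarev, and then obtain a contradiction from Theorem~\ref{Key} since $q_v$ has multiplicative order $m>n$ modulo $p$ and hence cannot be an $n$-th root of unity. Your treatment of the Chebotarev step in the compositum and of the dual side is slightly more explicit than the paper's, but the underlying ideas are identical.
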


\begin{proof}
Suppose $H^i_{\et}(X_{K, \ov{F}}, \ov{\bF}_p)_{\fm}$ is nonzero for some $i<n-1$. 
Pick a generator $h\in \Gal (F(\zeta_p) / F)$. It maps to a generator of the quotient $\Gal (E/F)$ as well. 
We can write $h$ as the (geometric) Frobenius of some finite place $v$ of $F$ such that $q_v=p_v$. 
(In particular, $p_v$ splits in $\cK$.)
Let $\sigma\in \Gamma_{F_v}$ denote a lift of $\Frob_v$; so $\sigma$ maps to $h\in\Gal (F(\zeta_p)/ F)$. 
As $h$ maps to a generator of $\Gal (E/F)$, the characteristic polynomial of $\rho_{\fm}(\sigma)$ has the form of $X^n -c$. 
Combined with Theorem \ref{Key}, this implies that $q_v^n\equiv 1 \mod p$. 
However, as $h$ is a generator of $\Gal (F(\zeta_p)/F)\subset (\bZ/p\bZ)^\times$ and $[F(\zeta_p)\colon F]> n$, we have $q_v^n\equiv p_v^n\not\equiv 1 \mod p$.  

The dual argument settles the case $i>n-1$. 
\end{proof}

\noindent\textbf{Acknowledgements}.
I thank Tetsushi Ito for helpful conversations. 
I am grateful for the anonymous referee for pointing out several errors and making suggestions that greatly improved the text.
The main result was first presented at ``Arithmetic Geometry in Carthage" held in June 2019, where I had conversations with Ana Caraiani and Pascal Boyer on this topic.  I am grateful for the organizers for making such an opportunity. 
This work was supported by JSPS KAKENHI Grant Number 20K14284.
\section{Preliminaries}

\subsection{Setting}
Let $F=F^+ \cK$ be a CM field with a totally real field $F^+$ and an imaginary quadratic field $\cK$.  We fix an embedding $\cK \hookrightarrow \bC$. We consider a PEL datum $(B, *, V, (\cdot, \cdot))$ of type A such that   
\begin{itemize}
\item $B$ is a division algebra with center $F$ and $V\cong B$, and
\item the associated group $G$ has signature $(1, n-1)$ at one infinite place, and $(0,n)$ at the other infinite places, where $n^2=\dim_F B$. (The signature is calculated using the fixed embedding $\cK \hookrightarrow \bC$.) 
\end{itemize}

Fix a sufficiently small open compact subgroup $K=\prod_\ell K_\ell $ of $G(\bA_f)$. If $\ell$ splits in $\cK$, by choosing a place $y$ of $\cK$ over $\ell$, we have an isomorphism $G(\bQ_\ell) \cong \bQ_\ell^\times \times \prod_x B_x^{\textnormal{op}\times}$, where $x$ runs over the places of $F$ lying over $y$.  

Let $\Spl^{\ur}$ denote the set of unramified places $v$ of $F$ satisfying
\begin{itemize}
    \item $v$ does not divide $p$,
    \item $p_v=v|_\bQ$ splits in $\cK$, 
    \item $B$ splits at all places above $p_v$, and 
    \item $K_{p_v}$, as a subgroup of $\bQ_{p_v}^\times \times \prod_x B_x^{\textnormal{op}\times}$, decomposed into a product of $\bZ_{p_v}^\times$ and maximal open compact subgroups $K_x$ of $B_x^{\textnormal{op}\times}$. 
\end{itemize}
Let $\bT$ denote the Hecke algebra
\[
\bigotimes_{p_v \in \Spl^{\ur}|_{\bQ}} \bZ[G (\bQ_{p_v}) // K_{p_v}]. 
\]
If we identify $K_v$ with $\GL_n(\cO_{F_v})$, its factor at $v$ is generated by 
\[
T_{v, j} =K_v \diag (\underbrace{p_v^{-1}, \ldots, p_v^{-1}}_j, \underbrace{1, \ldots, 1}_{n-j})K_v.
\]
We write $c_{v}$ for the element of $\bT$ determined by $p_v^{-1} \in \bQ_{p_v}^\times$. 
Our choice of the Hecke operators is different from the one of \cite{Wedhorn}, \cite{EG}, \cite{Boyer}, \cite{CS}. 

We denote by $X_K$ the canonical model of the Shimura variety attached to $(B, *, V, (\cdot, \cdot))$ of level $K$, which is a smooth projective variety over $F$. 
(We use the convention that (a disjoint union of copies of) the canonical model admits a usual moduli interpretation.)
The mod $p$ cohomology of $X_{K, \ov{F}}$ is naturally a module of $\bT \times G_F$. 

\begin{thm}[\cite{CS}*{6.3.1}]
Let $\fm$ be a maximal ideal of $\bT$ such that, for some $i$, $H^i_{\et}(X_{K, \ov{F}}, \overline{\bF}_p)_{\fm}\neq 0$. Then, there is a (unique) semisimple Galois representation $\rho_{\fm}\colon \Gamma_F \ra \GL_n(\ov{\bF}_p)$ and a character $\chi_{\fm}\colon \Gamma_{\cK} \ra \ov{\bF}_p^\times$, both unramified at $v\in \Spl^{\ur}$, such that the characteristic polynomial of $\rho_{\fm}(\Frob_v)$ for $v$ is given by
\[
\sum^n_{j=0}(-1)^j q_v^{j(j-1)/2}\ov{T}_{v,j} X^{n-j}
\]
and  $\chi_{\fm} (\Frob_{p_v}) =\ov{c}_v^{-1}$. 
where $\ov{T}_{v,j}$ and $\ov{c}_v$, denote the image of $T_{v,j}$ and $c_v$ in $\bT /\fm \cong \ov{\bF}_p$ respectively. 
\end{thm}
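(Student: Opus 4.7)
The plan is to split the result into two pieces, existence and uniqueness, and further to split existence into a ``liftable'' and a ``non-liftable'' case. Uniqueness is essentially automatic once we have existence: the places $v \in \Spl^{\ur}$ have Frobenius classes of full Dirichlet density in $\Gamma_F$, and their characteristic polynomials are prescribed, so Brauer-Nesbitt plus Chebotarev pin down the semisimple $\rho_{\fm}$; similarly the values $\chi_{\fm}(\Frob_{p_v}) = \bar c_v^{-1}$ over a density-one set determine the character $\chi_{\fm}$.

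For existence in the \emph{liftable} case — when there is a minimal prime $\fp \subset \fm$ of $\bT$ with $H^{\ast}_{\et}(X_{K,\ov F},\ov\bZ_p)_{\fp} \otimes \bar\bQ_p \neq 0$ — the pair $(\rho_{\fm},\chi_{\fm})$ is obtained from classical theory. That characteristic-zero localization contributes (by Kottwitz's trace formula and stable base change, as set up in \cite{HT}) a cuspidal automorphic representation $\pi$ of $\GL_n$ together with the similitude character; Harris-Taylor attach to $\pi$ a compatible system of $p$-adic Galois representations, whose Frobenius eigenvalues at $v \in \Spl^{\ur}$ are the Satake parameters of $\pi_v$. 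Semisimplifying the reduction mod $\fm$ yields $\rho_{\fm}$, and the explicit characteristic polynomial formula is then the Eichler-Shimura / congruence relation of \cite{Wedhorn} in our chosen normalization of Hecke operators. The character $\chi_{\fm}$ is read off from the similitude component, where the relation $\chi_{\fm}(\Frob_{p_v}) = \bar c_v^{-1}$ is forced by how $c_v$ acts as a central character.

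For a general $\fm$, there may be no such direct lift, and this is where Caraiani-Scholze's geometric input is essential. The method is to analyze $H^{\ast}_{\et}(X_{K,\ov F}, \ov\bF_p)_{\fm}$ via the Hodge-Tate period map $\pi_{HT}$ from the infinite-level perfectoid Shimura variety to the flag variety. Using properness of $\pi_{HT}$ on the minimally compactified side, one reduces computing $H^{\ast}(X_K,\ov\bF_p)_{\fm}$ to understanding the stalks of $R\pi_{HT,\ast}\ov\bF_p$, and these stalks can be described, Newton-stratum by Newton-stratum, in terms of cohomology of Igusa varieties. Hence the eigensystem $\fm$ must occur in the cohomology of some Igusa variety; eigensystems on Igusa cohomology do lift to characteristic zero (by a Deligne-Serre-type compactness argument applied to the finitely generated Igusa cohomology modules), so Harris-Taylor on the Igusa side produces a compatible Galois representation whose mod $p$ reduction furnishes $\rho_{\fm}$ with the required Frobenius data, and analogously for $\chi_{\fm}$.

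The genuine obstacle is the non-liftable case: one needs both the perfectoid / Hodge-Tate machinery to reduce $X_K$-cohomology to Igusa-cohomology, and then a careful matching of Satake parameters between the two sides so that the prescribed characteristic polynomial is preserved. Once the Igusa reduction is in place, the explicit shape of the characteristic polynomial and the formula for $\chi_{\fm}(\Frob_{p_v})$ fall out of the same Kottwitz computation used in the liftable case, so the main effort is really in the geometric reduction itself.
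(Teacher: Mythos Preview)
Your sketch is broadly correct and is really an expansion of the method that the paper simply cites from \cite{CS}. A few comparative remarks:

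\textbf{The liftable/non-liftable dichotomy is unnecessary.} The paper's proof (following \cite{CS}) does not make this case split. The point of the Caraiani--Scholze argument is precisely that, after passing to Igusa cohomology via the Hodge--Tate period map, one can \emph{always} lift $\fm$ to a characteristic-zero Hecke eigensystem. So the paper simply writes: ``we find a characteristic $0$ lift $\Pi$ of $\fm$ at first,'' and proceeds from there in a single stroke. Your ``non-liftable'' paragraph is essentially the mechanism that guarantees the lift exists, after which your ``liftable'' paragraph applies uniformly.

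\textbf{The construction of $\chi_{\fm}$ is made more explicit in the paper.} Rather than saying it is ``read off from the similitude component,'' the paper spells out the route: the lift $\Pi$ is a $C$-algebraic cuspidal automorphic representation of $G$; its stable base change to $\cK^\times \times \GL_n(F)$ has the form $\psi \otimes \Pi^1$; the Hecke character $\psi$ corresponds via class field theory (uniformizers $\leftrightarrow$ geometric Frobenii) to a $p$-adic character $\wt\chi_{\fm}^{-1}\colon \Gamma_{\cK}\to\ov\bQ_p^\times$; and $\chi_{\fm}$ is the reciprocal of its mod-$p$ reduction. This is what forces $\chi_{\fm}(\Frob_{p_v})=\ov c_v^{-1}$.

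\textbf{A small inaccuracy.} The displayed characteristic polynomial of $\rho_{\fm}(\Frob_v)$ comes from the unramified local Langlands / Satake correspondence applied to the base change $\Pi^1_v$, not from Wedhorn's congruence relation. The paper in fact remarks afterwards that the congruence relation is ``not logically needed'' here (Theorem~\ref{Key} being strictly stronger); it governs how $\Frob_v$ acts on the cohomology itself, which is a separate statement.
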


\begin{proof}
The existence of $\rho_{\fm}$ is proved in \cite{CS} or \cite{Boyer} up to normalization; our $\rho_{\fm}$ is a twist of the dual of the representation they constructed. The existence of $\chi_{\fm}$ can be proved by the same method. Namely, we find a characteristic $0$ lift $\Pi$ of $\fm$ at first; $\Pi$ is a $C$-algebraic cuspidal automorphic representation of $G$, and its stable base change is a $C$-algebraic isobaric automorphic representation of $\cK^\times \times \GL_n (F)$ of the form of $\psi\otimes \Pi^1$. The first factor $\psi$ gives rise to a character $\wt{\chi}_{\fm}^{-1}\colon \Gamma_{\cK} \ra \ov{\bQ}_p^{\times}$ via the class field theory. (The Artin map is normalized so that uniformizers correspond to lifts of geometric Frobenius.) The reciprocal of the reduction mod $p$ of $\wt{\chi}_{\fm}^{-1}$ is $\chi_{\fm}$. 
\end{proof}

Throughout this note, we regard $\chi_{\fm}$ as a character of $\Gamma_F (\subset \Gamma_{\cK})$ as well.  

\subsection{The congruence relation}
The congruence relation is not logically needed (in the sense that Theorem \ref{Key} is stronger) but we give a short explanation to clarify our notation and conventions. 
For every $v\in\Spl^{\ur}$, there is a canonical integral model $\fX_K$ of $X_K$, which is smooth and projective over $\cO_{F_v}$. The action of $\bT\times \Gamma_{F_v}$ passes to the mod $p$ cohomology of the special fiber $X_{K, \ov{k(v)}}$ of the canonical integral model. In particular, the Galois action on $H^i_{\et}(X_{K, \ov{F}}, \overline{\bF}_p)$ is unramified at $v$. 

Assume that $p_v$ splits completely in $F$. 
If we look at the Frobenius action on $H^i_{\et}(X_{K, \ov{F}}, \overline{\bF}_p)_{\fm}\otimes \chi_{\fm}$, the main result of \cite{Wedhorn} implies the following relation:
\[
\sum^n_{j=0}(-1)^j q_v^{j(j-1)/2}T_{v,j} \Frob_v^{n-j}=0. 
\]
The formula is stated incorrectly (or imprecisely) in \cite{EG}*{3.3.1} and \cite{Boyer}*{4.2}:
\begin{itemize}
\item The Hecke correspondence in \cite{Wedhorn} is a left action (as a correspondence), while the Hecke action on the Shimura variety is a right action. This is why we change the choice of the Hecke operator. 
\item The twist by $\chi_{\fm}$ is needed. 
\end{itemize}

\section{Proof of Theorem \ref{general}}
Suppose $H^i_{\et}(X_{K, \ov{F}}, \overline{\bF}_p)_{\fm}\neq 0$ for some $i < n/2$, and let $\rho$ be an irreducible constituent of $H^i_{\et}(X_{K, \ov{F}}, \overline{\bF}_p)_{\fm}\otimes \chi_{\fm}$ as a representation of $\Gamma_F$.  

Suppose that $\rho$ is a character $\chi$. Then, by Theorem \ref{Key} and Remark \ref{finite extension}, $\chi(g)\in \overline{\bF}_p^\times$ for any $g\in \Gamma_{F(\zeta_p)}$ appears in the set of generalized eigenvalues of $\rho_{\fm}(g)$ with multiplicity $\geq n-i$. Therefore, $(\rho_{\fm}\otimes \chi^{-1})(g)$ has a generalized eigenvalue $1$ with multiplicity $\geq n-i$. Since $n-i > n/2$, it contradicts to the following theorem. (This discussion also works for $i=n/2$.)

\begin{thm}[\cite{GM}*{1.5.(a)}]\label{GM:normal}
Let $H\subset \GL_n(\ov{\bF}_p)$ be a finite group whose action on $\ov{\bF}_p^n$ is irreducible. For any nontrivial normal subgroup $H'$ of $H$, there exists semisimple $h\in H'$ such that the multiplicity of $1$ in the set of eigenvalues of $h$ is less than $n/2$. 
\end{thm}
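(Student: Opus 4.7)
The plan is to reduce, via the structure of normal subgroups, to a minimal normal subgroup of $H$ and then split into the abelian and nonabelian cases. Since $H\hookrightarrow\GL_n(\ov{\bF}_p)$ acts irreducibly, any nontrivial normal $p$-subgroup of $H$ would have a nonzero proper $H$-invariant fixed space, forcing it to be trivial; thus $O_p(H)=1$ and consequently $O_p(H')=1$, so $H'$ contains nonidentity semisimple elements. As $H'$ is a nontrivial normal subgroup of $H$, it contains a minimal normal subgroup $N$ of $H$, and it suffices to produce $h\in N$ with $\dim V^h<n/2$. By the structure theorem for minimal normal subgroups, $N$ is either elementary abelian of exponent some prime $r\ne p$, or a direct product of isomorphic nonabelian finite simple groups.

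In the abelian case, Clifford theory yields $V|_N = e(\chi_1\oplus\cdots\oplus\chi_t)$, where the characters $\chi_i\colon N\to\ov{\bF}_p^\times$ form a single $H$-orbit and $n=et$. All $\chi_i$ are nontrivial, since otherwise they all are (the trivial character being $H$-fixed), contradicting faithfulness of the $H$-action on $V$. For $h\in N$, $\dim V^h = e\cdot\#\{i:\chi_i(h)=1\}$. Each nontrivial $\chi_i$ satisfies $|\ker\chi_i|=|N|/r$, so $\sum_{h\in N}\#\{i:\chi_i(h)=1\}=t|N|/r$. Subtracting the contribution $t$ from $h=1$ and averaging over the $|N|-1$ nonidentity elements gives
\[
\frac{t(|N|-r)}{r(|N|-1)} < \frac{t}{2},
\]
valid for $r\geq 2$ and $|N|\geq 2$. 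Hence some nonidentity (automatically semisimple) $h\in N$ achieves $\dim V^h<n/2$.

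In the nonabelian case, further Clifford-theoretic analysis shows that each irreducible $N$-constituent of $V|_N$ is an external tensor product of irreducible representations of the simple factors, at least one of which must be nontrivial and hence faithful by simplicity. The problem then reduces to establishing a uniform bound on fixed spaces of semisimple elements in faithful irreducible modular representations of nonabelian finite simple groups, from which one can assemble an element $h\in N$ acting nontrivially on a single factor with fixed-space ratio less than $1/2$ on every constituent. This last reduction is the main obstacle and is the key input of \cite{GM}: its proof requires the classification of finite simple groups and a case-by-case analysis across the alternating, classical Lie type, exceptional Lie type, and sporadic families, and is not approachable by elementary means.
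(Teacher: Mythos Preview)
Your proposal is correct and follows essentially the same approach as the paper. The paper's proof is nothing more than the observation that the argument of \cite{GM}*{1.5.(a)} already locates the desired element inside any minimal normal subgroup $N$ of $H$, and since every nontrivial normal subgroup $H'$ contains such an $N$, the stronger statement follows; you spell out this reduction explicitly, supply a clean averaging argument for the abelian case, and then (like the paper) defer the nonabelian case to the classification-dependent analysis in \cite{GM}.
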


\begin{proof}
In \cite{GM}, this is stated with $H'=H$. The proof actually finds $h$ in any given minimal normal subgroup $N$ of $H$. 
\end{proof}

Suppose $\dim \rho \geq 2$. Then, we claim that $\rho|_{\Gamma_{F(\zeta_p)}}$ is not unipotent modulo scalar, namely $\rho(h)$ is not unipotent modulo scalar for some $h\in \Gamma_{F(\zeta_p)}$. 
Indeed, assume that $\rho(h)$ is unipotent modulo scalar for every $h\in \Gamma_{F(\zeta_p)}$. 
Set $\ov{H}\coloneqq\rho (\Gamma_{F(\zeta_p)})/ (\textnormal{scalar})$; this is a $p$-group because the order of any element is a power of $p$. If $\ov{H}$ is nontrivial, $\ov{H}$ has a nontrivial center $Z$. If $\widetilde{Z}$ denotes the inverse image of $Z$ in $\rho (\Gamma_{F(\zeta_p)})$, then $\widetilde{Z}$ is abelian. (Consider the Jordan decomposition of elements of $\widetilde{Z}$.) Moreover, the restriction of $\rho$ to $\widetilde{Z}$ is semisimple since $\widetilde{Z}$ is normal in $\rho (\Gamma_{F(\zeta_p)})$ and $\rho (\Gamma_{F(\zeta_p)})$ is normal in $\rho (\Gamma_{F})$. This is impossible as $\widetilde{Z}$ contains an element of order $p$. Thus $\ov{H}$ is trivial, i.e., $\rho$ is scalar on $\Gamma_{F(\zeta_p)}$. 
Then, $\rho(\Gamma_F)$ is abelian as $F(\zeta_p)$ is a cyclic extension over $F$. Contradiction. 

So, there exists $h\in \Gamma_{F(\zeta_p)}$ such that $\rho(h)$ has at least two \emph{distinct} eigenvalues, say $\alpha, \beta$. Each has multiplicity $\geq n-i$ in the multiset of generalized eigenvalues of $\rho_{\fm}(h)$. Thus, $\dim \rho_{\fm} \geq 2(n-i)>n$. Contradiction. 

If $i> 2(n-1)-n/2$, the vanishing of $H^i_{\et}(X_{K, \ov{F}}, \overline{\bF}_p)_{\fm}$ follows from the vanishing of $H^{2(n-1)-i}_{\et}(X_{K, \ov{F}}, \overline{\bF}_p)_{\fm^{\vee}}$ and the Poincar\'{e} duality because $\rho_{\fm^{\vee}}$ is also irreducible. 

\section{Proof of Theorem \ref{small}}

We may only consider cohomology below the middle degree because the duality preserves the condition that $\rho_{\fm}$ is irreducible (and the exceptional case stated in Theorem \ref{small}). 

Suppose that $H^{i}_{\et}(X_{K, \ov{F}}, \overline{\bF}_p)_{\fm}\neq 0$ for some $i < 4$. We will consider two cases: 
\begin{enumerate}
\item The restriction of $\rho_{\fm}$ to $\Gamma_{F(\zeta_p)}$ is irreducible. 
\item The restriction of $\rho_{\fm}$ to $\Gamma_{F(\zeta_p)}$ is not irreducible. 
\end{enumerate}

\subsection{Group-theoretic results}
We will use another group-theoretic result from \cite{GM}:

\begin{thm}[\cite{GM}*{1.7}]\label{GM}
Let $H$ be a finite nonabelian simple group and $p$ be a prime number. 
Assume that $(H, p)\neq (\fA_5, 5)$. Then, there exist $p'$-elements $x, y,z \in H$ with $xyz=1$ such that $H=\langle x, y\rangle$. 
\end{thm}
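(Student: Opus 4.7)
The plan is to reduce to a case-by-case verification via the classification of finite simple groups. Since $xyz=1$ forces $z=(xy)^{-1}$, the task is to find a generating pair $(x,y)$ of $H$ with each of $|x|$, $|y|$, $|xy|$ coprime to $p$. The standard tool for this kind of question is the Frobenius--Scott structure constant
\[
N(C_1,C_2,C_3)=\frac{|C_1|\,|C_2|}{|H|}\sum_{\chi\in\mathrm{Irr}(H)}\frac{\chi(c_1)\chi(c_2)\ov{\chi(c_3)}}{\chi(1)},
\]
which counts pairs $(x,y)\in C_1\times C_2$ with $xy$ lying in a fixed class $C_3$. After choosing $p'$-classes and subtracting the contributions from every maximal overgroup of $\langle x,y\rangle$, what remains is a count of the desired generating triples; it suffices to show this residual count is positive.

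For sporadic groups, the plan is to feed character tables from the ATLAS into the formula above for each prime $p\mid|H|$, select appropriate $p'$-classes, and use the known list of maximal subgroups of $H$ to subtract non-generating contributions. This is a finite verification. For alternating groups $\fA_n$ with $n\geq 5$, the plan is to construct generating triples explicitly from a long cycle together with a short cycle or involution, adjusting so that all three orders remain coprime to $p$. The excluded case $(\fA_5,5)$ is intrinsic: the $p'$-element orders in $\fA_5$ are $1,2,3$, and every $(2,3,m)$-triangle group with $m\leq 3$ is solvable, so no $p'$-generating triple can exist there.

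For finite simple groups of Lie type $H=G(q)$ in defining characteristic $\ell$, the cases $p=\ell$ and $p\neq\ell$ are handled in parallel. One picks two regular semisimple classes $C_1, C_2$ of $p'$-order coming from non-conjugate maximal tori; such choices are available away from a small list of exceptions because $p$-regular regular semisimple classes are plentiful when the rank or the field size is not tiny. Regularity forces each centralizer to be a maximal torus, so any proper overgroup of $\langle x,y\rangle$ must lie in a member of the Aschbacher classes (for classical groups) or of the Liebeck--Seitz list (for exceptional groups). Each such overgroup contributes an estimable correction to the structure-constant count that is dominated by $N(C_1,C_2,C_3)$ for generic parameters.

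The main obstacle is the groups-of-Lie-type case in low rank and small $q$, where the list of maximal subgroups is longest and the structure constants smallest, so generic bounds fail and one must argue case by case. It is precisely in this regime that the single genuine exception $(\fA_5,5)$ arises, via $\fA_5\cong\PSL_2(\bF_4)\cong\PSL_2(\bF_5)$, so the delicate part of the proof is verifying that no other small case fails the conclusion once one has restricted to $p'$-classes.
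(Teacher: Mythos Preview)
The paper does not prove this theorem at all: it is quoted as Theorem~1.7 of Guralnick--Malle \cite{GM} and invoked as a black box in \S4.3. There is therefore no ``paper's own proof'' to compare your attempt against.

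As for the proposal itself, what you have written is not a proof but a plan. The reformulation (find a generating pair $(x,y)$ with $|x|$, $|y|$, $|xy|$ all prime to $p$), the use of the structure-constant formula, and the case split via the classification of finite simple groups are all correct in outline and are indeed the strategy Guralnick and Malle follow. But every substantive step is deferred: the sporadic groups become ``a finite verification,'' the alternating groups become ``construct explicitly, adjusting so that\ldots,'' and the Lie-type groups become ``each such overgroup contributes an estimable correction\ldots dominated by $N(C_1,C_2,C_3)$ for generic parameters.'' None of these estimates or constructions is actually carried out, and the low-rank/small-$q$ cases you flag as ``the main obstacle'' are precisely where the real content of \cite{GM} lies. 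If this were submitted as a proof rather than a sketch, it would be incomplete; as a description of the architecture of the argument in \cite{GM}, it is accurate.
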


This will be combined with Scott's lemma:

\begin{thm}[\cite{Scott}]\label{Scott}
Let $H$ be a finite group acting on a finite-dimensional vector space $V$ over a field $k$. Assume that $x, y,z$ generate $H$ and satisfy $xyz=1$. Then, 
\[
\dim V + \dim V^{H} + \dim (V^{\vee})^H \geq \dim V^x + \dim V^y + \dim V^z,
\]
where $V^*$ denotes the space of fixed vectors under the action of $*$. 
\end{thm}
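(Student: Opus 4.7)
The plan is to set up a short ``chain complex'' $V \xrightarrow{\partial_2} V^3 \xrightarrow{\partial_1} V$ and read off the desired inequality from rank-nullity. Concretely, I would set
\[
\partial_1(a,b,c) \coloneqq (x-1)a + (y-1)b + (z-1)c, \qquad \partial_2(v) \coloneqq (yz\,v,\ z\,v,\ v),
\]
and observe that the relation $xyz = 1$ gives $\partial_1 \circ \partial_2 = 0$ by direct telescoping: $(x-1)(yzv) + (y-1)(zv) + (z-1)v = (xyz-1)v = 0$. The particular form of $\partial_2$ is what the (left) Fox calculus of the one-relator word $xyz$ dictates; in particular $\partial_2$ is visibly injective.

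Next, $\ker \partial_1$ contains both the subspace $U \coloneqq V^x \oplus V^y \oplus V^z$ (each factor kills the corresponding $(x_i - 1)$) and $\mathrm{Im}\,\partial_2$. To compute the intersection, note that an element $(yz v, z v, v) \in U$ forces, reading from the third slot inward, $v \in V^z$ (so $zv = v$), then $v \in V^y$ (using $zv = v$), and finally $yzv = v \in V^x$; so $U \cap \mathrm{Im}\,\partial_2$ is the diagonal image of $V^H$, of dimension $\dim V^H$. Therefore
\[
\dim \ker \partial_1 \ \geq\ \dim(U + \mathrm{Im}\,\partial_2) \ =\ \dim U + \dim V - \dim V^H.
\]

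The remaining ingredient is the subspace identification $\mathrm{Im}\,\partial_1 = I_H \cdot V$, the augmentation subspace. The inclusion $\subseteq$ is immediate. For the reverse, given $g \in H$ written as a word $s_1 \cdots s_k$ in the generators and their inverses, the telescoping identity $g - 1 = (s_1 - 1)\,s_2 \cdots s_k + (s_2 \cdots s_k - 1)$, together with the elementary observation $\mathrm{Im}(x_i^{-1} - 1) = \mathrm{Im}(x_i - 1)$ (from the substitution $v \mapsto xv$), lets me induct on word length to place $(g-1)v$ into $\mathrm{Im}(x-1) + \mathrm{Im}(y-1) + \mathrm{Im}(z-1)$. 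Standard duality then gives $\dim \mathrm{Im}\,\partial_1 = \dim V - \dim V_H = \dim V - \dim (V^\vee)^H$.

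Combining with $\dim V^3 = \dim \ker \partial_1 + \dim \mathrm{Im}\,\partial_1$ yields $\dim \ker \partial_1 = 2\dim V + \dim (V^\vee)^H$, and substituting into the inequality of the previous paragraph rearranges to the desired bound $\dim V^x + \dim V^y + \dim V^z \leq \dim V + \dim V^H + \dim (V^\vee)^H$. The step I expect to be most delicate is the subspace-level identification of $\mathrm{Im}\,\partial_1$ with $I_H V$: the inclusion $\mathrm{Im}\,\partial_1 \subseteq I_H V$ is trivial, but the reverse inclusion is where the hypothesis that $x, y, z$ generate $H$ actually enters, and it is not automatic since $I_H V$ is a priori only the $H$-submodule generated by $\mathrm{Im}\,\partial_1$ rather than the subspace itself.
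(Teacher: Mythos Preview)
The paper does not give its own proof of this theorem; it simply cites Scott's original paper~\cite{Scott}. Your argument is correct and is essentially Scott's: build the two-step complex coming from the free presentation $\langle x,y,z \mid xyz\rangle$, identify the image of $\partial_1$ with the augmentation subspace $I_H V$ (whence $\dim \mathrm{Im}\,\partial_1 = \dim V - \dim (V^{\vee})^H$), and bound $\ker\partial_1$ from below by $V^x\oplus V^y\oplus V^z + \mathrm{Im}\,\partial_2$. Your worry about the ``delicate step'' is unfounded in the sense that your own telescoping-plus-$\mathrm{Im}(s^{-1}-1)=\mathrm{Im}(s-1)$ argument already settles it cleanly; it shows directly that $\text{Im}(x-1)+\text{Im}(y-1)+\text{Im}(z-1)$ is all of $I_H V$ as a \emph{subspace}, without needing $H$-stability of $\mathrm{Im}\,\partial_1$.
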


\subsection{Preliminary analysis}\label{pre}
Before dealing with the case (1), let us first analyze a slightly more general situation where $n$ is a prime and $\rho_{\fm}|_{\Gamma_{F(\zeta_p)}}$ is irreducible. 
The discussion below partly follows referee's suggestion.

Let $N$ be a minimal noncentral normal subgroup of $\rho_{\fm}(\Gamma_{F(\zeta_p)})$. 
We make the following hypothesis throughout \S \ref{pre}:
\begin{center}
\emph{$N$ is not a quasi-simple group.} 
\end{center}
We shall show that $\rho_{\fm}(\Gamma_{F(\zeta_p)})$ contains a \emph{regular semisimple} element in the following cases:
\begin{enumerate}
    \item[(i)] the restriction of $\rho_{\fm}$ to $N$ is irreducible.
    \item[(ii)] the restriction of $\rho_{\fm}$ to $N$ is not irreducible and $n\neq p$.
    \item[(iii)] the restriction of $\rho_{\fm}$ to $N$ is not irreducible, $n=p=5$, and $N$ is not an elementary abelian $r$-group for $r= 2,3$.
\end{enumerate}
Hence, Theorem \ref{regss} gives the vanishing outside the middle degree in these cases. 

Let us first observe that the hypothesis above implies that $N$ is not perfect in all cases: if $N$ is perfect, then $N$ is nonabelian and $\rho_{\fm}|_N$ is a faithful irreducible representation of $N$ of \emph{prime} dimension. However, this cannot happen because $N$ modulo the center $Z(\rho_{\fm}(\Gamma_{F(\zeta_p)}))$ of $\rho_{\fm}(\Gamma_{F(\zeta_p)})$ is the direct product of (more than one) isomorphic simple groups as $N/Z(\rho_{\fm}(\Gamma_{F(\zeta_p)}))$ is a minimal normal subgroup of $\rho_{\fm}(\Gamma_{F(\zeta_p)})/Z(\rho_{\fm}(\Gamma_{F(\zeta_p)}))$. 

Therefore, $[N,N]$ is central in $\rho_{\fm}(\Gamma_{F(\zeta_p)})$ by the minimality of $N$. Hence, $N$ is \emph{nilpotent}. 
Using the minimality again, we see that $N$ is an $r$-group for some prime $r$. 
Let us now study each case.

\begin{enumerate}
    \item[(i)] Assume $\rho_{\fm}|_N$ is irreducible. In particular, $N$ is a nonabelian nilpotent $r$-group. As $\rho_{\fm}|_N$ is irreducible and $n$ is prime, we see that $n=r$ and $p\neq n$. In fact, $\rho_{\fm}|_N$ is induced from a character. So, $N$ contains a regular semisimple element by \cite{GM}*{5.2}\footnote{$n$ is assumed to be odd in the reference, but it is not used in the argument of \cite{GM}*{5.2}.}.
    \item[(ii)] Assume $\rho_{\fm}|_N$ is not irreducible and $n\neq p$. Then, $\rho_{\fm}|_N$ is the direct sum of \emph{distinct} characters as $N$ is a \emph{noncentral} normal subgroup of $\rho_{\fm}(\Gamma_{F(\zeta_p)})$. So, $\rho_{\fm}|_{\Gamma_{F(\zeta_p)}}$ is induced from a character of a subgroup of $\rho_{\fm}(\Gamma_{F(\zeta_p)})$ containing $N$, and $\rho_{\fm}(\Gamma_{F(\zeta_p)})$ contains a regular semisimple element by \cite{GM}*{5.2}. 
    \item[(iii)] Assume $\rho_{\fm}|_N$ is not irreducible, $n=p=5$, and $N$ is not an elementary abelian $r$-group for $r= 2,3$. 
    Note that $N$ is a subgroup of the diagonal $(\overline{\bF}_5^\times)^5$ stable under a permutation $\tau$ of order $5$, so $N$ is an \emph{elementary abelian} $r$-group for the prime $r\ge 7$ by the minimality of $N$.
    (It suffices to observe that $N$ contains a non-scalar element of order $r$: take a non-scalar element $x\in N$ such that $x^r$ is a scalar. Then, $\tau (x) x^{-1}$ is a nontrivial element of order $r$ and it is not a scalar since $\tau$ has order $n$ and $n=p\neq r$.)
    We claim that there is a regular semisimple element inside $N$.
    
    Let us prove the claim. From now on, we identify $N$ with a nontrivial subrepresentation of $\bZ/5\bZ$ acting on $\bF_{r}^5$. If $r\not\equiv 4 \mod 5$, then the complement of the trivial representation in $\bF_r^5$ is either irreducible or the direct sum of 4 distinct characters, and it is easy to find a regular semisimple element as $r\geq 7$. 
    
    If $r\equiv 4 \mod 5$, then the complement of the trivial representation is the direct sum of two irreducible two-dimensional subrepresentations. An element $x$ of each subrepresentation can be written as the component-wise trace of $(a, a\zeta_5, a\zeta_5^2, a\zeta_5^3, a\zeta_5^4)$ for $a\in \bF_{r^2}$ and a choice of $\zeta_5$. It is easy to see that $x$ corresponds to a regular semisimple element if and only if all coordinates are distinct if and only if $\overline{a}/a \notin \{1, \zeta_5, \zeta_5^2, \zeta_5^3, \zeta_5^4\}$, where $\overline{a}$ denotes the conjugate of $a$. 
    Any norm 1 element in $\bF_{r^2}$ has the form of $\overline{a}/a$. Since the number of norm 1 elements in $\bF_{r^2}$ is $r+1$ and $r+1 >5$ by the assumption, we can find a regular semisimple $x$. 
\end{enumerate}

\subsection{The case (1)}
Now $n=5$ and assume $\rho_{\fm}|_{\Gamma_{F(\zeta_p)}}$ is irreducible as in (1). 
By the discussion on (i)-(iii) above with Theorem \ref{regss}, we only need to consider the case where \emph{some} minimal noncentral normal subgroup $N$ of  $\rho_{\fm}(\Gamma_{F(\zeta_p)})$ is \emph{quasi-simple}. Note that $N$ acts irreducibly on $\rho_{\fm}$. 
Let $\rho_0$ be an irreducible constituent of $H^{i}_{\et}(X_{K, \ov{F}}, \overline{\bF}_p)_{\fm}\otimes \chi_{\fm}$ as a representation of $\Gamma_F$. We regard $\rho_0$ as a representation of $N$; this is possible by \cite{EG}*{4.1.3}. (The action of $\Gamma_F$ on $\rho_0$ factors through $\rho_{\fm}(\Gamma_{F})$.) 

Now let $\rho$ be an irreducible constituent of $\rho_0$ as a representation of $N$. 
By Theorem \ref{Key} and Schur's lemma, the center $Z$ of $N$ acts on $\rho$ and $\rho_{\fm}$ by the same character. Therefore, $\rho_{\fm}\otimes \rho^{\vee}$ becomes a representation of $N/Z$, which is a \emph{simple} nonabelian group. 

Assume $N/Z\neq \fA_5$ or $p \neq 5$. Suppose first that $\rho$ is not isomorphic to $\rho_{\fm}|_N$. Then, we can apply Theorem \ref{GM} and Scott's lemma (Theorem \ref{Scott}), and there is an element $n_0$ of $N$ satisfying the following conditions:
\begin{itemize}
    \item $n_0$ is a $p'$-element. In particular, the action of $n_0$ on $\rho_{\fm}\otimes \rho^{\vee}$ is \emph{semisimple}. 
    \item $\dim (\rho_{\fm}\otimes \rho^{\vee})^{n_0}\leq (5\dim \rho) /3$. 
\end{itemize}
(Note that $Z$ is a prime-to-$p$ group since $Z \subset N \subset \rho_{\fm}(\Gamma_F)$ consists of scalars.) 
But, this contradicts to
\begin{lem}
$\dim (\rho_{\fm}\otimes \rho^{\vee})^{n_0} \ge 2 \dim \rho$. 
\end{lem}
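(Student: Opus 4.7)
The plan is to exploit that $n_0$ is a $p'$-element, hence semisimple on every $\ov{\bF}_p$-representation. Decomposing $\rho_{\fm}$ and $\rho$ into $\langle n_0\rangle$-eigenspaces and writing $m_V(\alpha)$ for the multiplicity of $\alpha\in\ov{\bF}_p^{\times}$ as an eigenvalue of $n_0$ on $V$, one has
\[
\dim(\rho_{\fm}\otimes\rho^{\vee})^{n_0}=\sum_{\alpha}m_{\rho_{\fm}}(\alpha)\,m_{\rho}(\alpha).
\]
It will therefore be enough to show that every eigenvalue of $n_0$ on $\rho$ appears in $\rho_{\fm}$ with multiplicity at least $2$: the right-hand side is then at least $2\sum_{\alpha}m_{\rho}(\alpha)=2\dim\rho$.

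For that multiplicity bound, I would combine Chebotarev with Theorem~\ref{Key} in the simplified form of Remark~\ref{finite extension}. Since $n_0\in N\subset\rho_{\fm}(\Gamma_{F(\zeta_p)})$, Chebotarev produces a finite place $v'$ of $F(\zeta_p)$ with $q_{v'}\equiv 1\bmod p$ such that $\rho_{\fm}(\Frob_{v'})$ and $n_0$ are conjugate in the finite group $\rho_{\fm}(\Gamma_{F(\zeta_p)})$, hence in $\rho_{\fm}(\Gamma_F)$. Because the $\Gamma_F$-action on $\rho_0$ factors through $\rho_{\fm}(\Gamma_F)$, this conjugacy forces $n_0$ and $\Frob_{v'}$ to act on $\rho_0$ with identical eigenvalue multisets. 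Now the eigenvalues of $\Frob_{v'}$ on $\rho_0$ are Frobenius eigenvalues on the full cohomology $H^i_{\et}(X_{K,\ov{F}},\ov{\bF}_p)_{\fm}\otimes\chi_{\fm}$, so Theorem~\ref{Key} shows that each appears in the eigenvalues of $\rho_{\fm}(\Frob_{v'})$ with multiplicity at least $n-i=5-i\geq 2$, using the standing assumption $i<4$. Transferring across the $\GL_n$-conjugacy $\rho_{\fm}(\Frob_{v'})\sim n_0$, the same bound holds for the eigenvalues of $n_0$ on $\rho_{\fm}$.

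The reduction from $\rho_0$ to $\rho$ is then immediate, since $\rho\subset\rho_0|_N$ as $N$-representations: every $n_0$-eigenvalue of $\rho$ is an $n_0$-eigenvalue of $\rho_0$ and inherits the bound. I do not foresee any substantial obstacle. The main point to watch is the chain of eigenvalue identifications, where Chebotarev provides exactly the level of conjugacy needed (inside $\rho_{\fm}(\Gamma_F)$, not merely inside $\GL_n$) to transport the Frobenius-eigenvalue information on $\rho_0$ to the action of $n_0$; everything else is a direct application of Theorem~\ref{Key} and the semisimplicity of the $p'$-element $n_0$.
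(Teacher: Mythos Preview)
Your proof is correct and follows essentially the same route as the paper's. Both arguments reduce to the key fact that every eigenvalue of $n_0$ on $\rho$ occurs in $\rho_{\fm}(n_0)$ with multiplicity at least $2$ (via Theorem~\ref{Key} and Remark~\ref{finite extension} applied over $F(\zeta_p)$); you then invoke the eigenspace-counting identity $\dim(\rho_{\fm}\otimes\rho^{\vee})^{n_0}=\sum_{\alpha}m_{\rho_{\fm}}(\alpha)m_{\rho}(\alpha)$, whereas the paper exhibits the $2\dim\rho$ fixed vectors $w_{i,j}\otimes v_{\lambda,i}^{\vee}$ explicitly. Your extra care in noting that Chebotarev yields conjugacy inside $\rho_{\fm}(\Gamma_F)$ (so that the eigenvalue multiset on $\rho_0$ transfers to $n_0$) is a point the paper leaves implicit.
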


\begin{proof}
Let $\lambda_1, \dots, \lambda_{\dim \rho}$ denote the eigenvalues of $\rho(n_0)$ and write $v_{\lambda, i}$ for an eigenvector corresponding to $\lambda_i$ so that it forms a basis of $\rho$. The dual basis is denoted by $v_{\lambda,i}^{\vee}$.  
Theorem \ref{Key} and Remark \ref{finite extension} imply that, for each $i$, $\rho_{\fm}(n_0)$ has eigenvalue $\lambda_i$ with multiplicity $\geq 2$ since $n_0 \in N \subset \rho_{\fm}(\Gamma_{F(\zeta_p)})$. 
Take two linearly independent eigenvectors $w_{i,1}, w_{i,2}$ of $\rho_{\fm}(n_0)$ with eigenvalue $\lambda_i$.
The following $2\dim \rho$ vectors
\[
w_{1,1}\otimes v_{\lambda,1}^{\vee},  w_{1,2}\otimes v_{\lambda,1}^{\vee}, \dots, 
w_{\dim \rho,1}\otimes v_{\lambda, \dim\rho}^{\vee},  w_{\dim \rho,2}\otimes v_{\lambda, \dim\rho}^{\vee}
\]
are linearly independent and fixed by $n_0$. 
\end{proof}

Next, suppose $\rho$ is isomorphic to $\rho_{\fm}|_N$. Then, $\rho_{\fm}\otimes \rho^{\vee}\cong \End (\rho_{\fm})$, as a representation of $N$, is self-dual and has $1$-dimensional subrepresentation and quotient representation given by the scalars and the trace map respectively, and there is no other trivial subrepresentation or quotient representation. So, again by Theorem \ref{GM} and Scott's lemma (Theorem \ref{Scott}), we get an inequality $(25+2) /3 \geq 2\dim \rho=10$, which is impossible. 

The only remaining case is $N/Z=\fA_5$ and $p=5$. Note that the only such quasi-simple group is $\fA_5$ itself or $\SL_2(\bF_5)$, which is a double covering of $\fA_5$. 

\begin{itemize}
\item Suppose $N=\fA_5$. 
There are only three isomorphism classes of irreducible representations in characteristic $5$, and one of them has dimension $5$; it must be $\rho_{\fm}$. The other two are the trivial representation and a faithful 3-dimensional representation defined over $\bF_5$. Whatever $\rho$ is, any element $g$ of order $3$ has an eigenvalue $1$. However, $\rho_{\fm}(g)$ has the eigenvalues $\{1, \zeta_3, \zeta_3, \zeta_3^2, \zeta_3^2\}$ and $1$ has the multiplicity one. This contradicts to Theorem \ref{Key} and Remark \ref{finite extension}. 
\item Suppose $N=\SL_2(\bF_5)$. Any irreducible representations in characteristic $5$ is given by the symmetric power $\Sym^k \bF_5^2$ of the standard representation of dimension $2$ for an integer $k\in [0,4]$. 
So, $\rho_{\fm}|_N$ must be isomorphic to $\Sym^4 \bF_5^2$. However, the central character of $\Sym^4 \bF_5^2$ is trivial; this contradicts to $N\subset \rho_{\fm}(\Gamma_{F(\zeta_p)})$. 
\end{itemize}

\subsection{The case (2)}
Again $n=5$ and now assume that $\rho_{\fm}|_{\Gamma_{F(\zeta_p)}}$ is not irreducible as in (2). (In particular, $5$ divides $[F(\zeta_p):F]$ and $p-1$.) Then, $\rho_{\fm}$ is induced from a character, and if $[F(\zeta_p)\colon F]>5$ we can apply Proposition \ref{induced from character}. 

Suppose $[F(\zeta_p)\colon F]=5$, in which case $\rho_{\fm}$ is induced from a character $\psi$ of $\Gamma_{F(\zeta_p)}$. 
Take a lift $g\in \Gamma_F$ of a generator of $\Gal(F(\zeta_p)/F)$, and we denote $\rho_{\fm}(g)$ by the same symbol $g$. The restriction of $\rho_\fm$ to $\Gamma_{F(\zeta_p)}$ is the direct sum of $\psi^{g^i}$ for $i=0, 1, 2, 3, 4$. 

Let $\rho$ be an irreducible constituent of $H^{i}_{\et}(X_{K, \ov{F}}, \overline{\bF}_p)_{\fm}\otimes \chi_{\fm}$ as a representation of $\Gamma_F$. By \cite{EG}*{4.1.3}, $\rho$ may be regarded as a representation of $\rho_{\fm}(\Gamma_F)$. By Theorem \ref{Key}, Remark \ref{finite extension}, and \cite{EG}*{4.1.4}, the restriction of $\rho$ to $H\coloneqq\rho_{\fm}(\Gamma_{F(\zeta_p)})$ decomposed into characters. 
We first show that $\rho$ itself is not a character of $\Gamma_F$; this implies that $\rho$ is of dimension $5$ and it is induced from a character of $H$. 

\begin{lem}
$\rho$ is not a character of $\Gamma_F$. 
\end{lem}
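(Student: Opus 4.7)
The plan is to assume for contradiction that $\rho$ equals a character $\chi$ of $\Gamma_F$ and derive a contradiction from Theorem~\ref{Key}. By the Chebotarev argument of Remark~\ref{finite extension} applied with $F' \supseteq F(\zeta_p)$, the hypothesis that $\chi$ is a constituent of $H^i_{\et}(X_{K, \ov F}, \ov\bF_p)_{\fm} \otimes \chi_{\fm}$ forces the following pointwise condition: for every $h \in H \coloneqq \rho_{\fm}(\Gamma_{F(\zeta_p)})$, the value $\chi(h)$ appears with multiplicity at least $5 - i \ge 2$ among the eigenvalues $\{\psi^{g^k}(h) : 0 \le k \le 4\}$ of $\rho_{\fm}(h)$.

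I would first split into cases according to whether $\chi|_{\Gamma_{F(\zeta_p)}}$ equals some $\psi^{g^{k_0}}$. If so, then since $\chi$ extends to $\Gamma_F$ the character $\chi|_{\Gamma_{F(\zeta_p)}}$ is invariant under the $g$-action cyclically permuting the $\psi^{g^k}$, forcing $\psi^g = \psi$; this would make $\rho_{\fm} = \mathrm{Ind}\,\psi$ reducible, contradicting irreducibility. Otherwise, setting $K_k \coloneqq \ker(\chi/\psi^{g^k}) \subseteq H$, each $K_k$ is a proper subgroup of index $\ge 2$, and the pointwise condition yields $\sum_k |K_k| \ge (5-i)|H|$. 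Combined with $|K_k| \le |H|/2$, this gives $5-i \le 5/2$, handling $i \in \{0, 1, 2\}$ at once.

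For the saturated sub-case $i = 3$, $g$-equivariance and integrality of the index force $|K_k| = |H|/2$ for every $k$, so each $\chi/\psi^{g^k}$ has order exactly $2$ on $H$. Equivalently, $\eta \coloneqq \psi^g/\psi$ is a nontrivial order-$2$ character of $\Gamma_{F(\zeta_p)}$ with trivial norm $\prod_{k=0}^{4}\eta^{g^k} = 1$, and $\delta \coloneqq \chi/\psi$ is an order-$2$ character satisfying $g\cdot\delta = \delta\eta$. I would then analyze this relation inside the $\bF_2$-vector space $\Hom(\Gamma_{F(\zeta_p)}, \{\pm 1\})$: a direct computation shows that the equation $g\cdot\delta = \delta\eta$ has at most one nontrivial solution $\delta = \eta^g\eta^{g^3}$ inside the span of $\{\eta^{g^k}\}$ modulo the norm relation. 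In either alternative I would use Chebotarev to produce a violating $h$: if $\delta$ is not in the span, take $h$ with $\eta^{g^k}(h) = 1$ for all $k$ and $\delta(h) = -1$, so that every $\psi^{g^k}(h) = \psi(h)$ while $\chi(h) = -\psi(h)$; if $\delta = \eta^g\eta^{g^3}$, take $h$ realizing $(\eta^{g^k}(h))_{k=0}^{4} = (-,+,+,+,-)$, so that $\chi(h) = \psi(h)$ while only the eigenvalue $\psi^{g^0}(h) = \psi(h)$ equals it among the $\psi^{g^k}(h)$. Either alternative contradicts the pointwise multiplicity bound.

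The main obstacle is the saturated sub-case $i = 3$: the counting inequality leaves no slack, so the contradiction must be extracted from the order-$2$ character structure via the $\bF_2$-module computation above combined with Chebotarev. The sub-cases $i \le 2$ and Case~$1$ (where $\chi|_{\Gamma_{F(\zeta_p)}}$ is among the $\psi^{g^k}$) are settled by the elementary counting argument and the $g$-invariance observation respectively.
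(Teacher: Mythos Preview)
Your argument is correct and constitutes a genuinely different proof from the paper's.

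The paper argues directly with eigenvalue patterns of $(\rho_{\fm}\otimes\chi^{-1})(h)$: starting from the multiplicity-$\ge 2$ constraint, it repeatedly applies the constraint to products $h\cdot g^{i}hg^{-i}$ (which lie in $H$ and whose eigenvalue multisets are determined by the cyclic shift of that of $h$) and case-splits on the position of the two guaranteed $1$'s in the cyclic $5$-tuple. This bootstraps the multiplicity of $1$ first to $\ge 4$ and then to $5$ for \emph{every} $h\in H$, forcing $\psi=\psi^{g}$ and hence reducibility. No separate treatment of small $i$ is needed; only the bound $5-i\ge 2$ is used.

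Your route is more structural: after a counting argument dispatching $i\le 2$ (which, incidentally, is already covered by Theorem~\ref{general}), you recognize that in the saturated case $i=3$ the characters $\chi/\psi^{g^{k}}$ all have exact order~$2$, and recast the problem as solving $(g-1)\delta=\eta$ in $\Hom(H,\{\pm 1\})$. Since $\eta$ has trivial norm and is nonzero, the cyclic $\bF_{2}[\bZ/5]$-module it generates is the simple module $\bF_{16}$, on which $g-1$ is invertible with inverse $\zeta+\zeta^{3}$; this pins down the unique in-span solution $\delta=\eta^{g}\eta^{g^{3}}$. Exhibiting a violating $h$ then reduces to realizing a suitable linear functional, which is pure $\bF_{2}$-duality rather than Chebotarev (Chebotarev is what establishes the pointwise bound on all of $H$ in the first place). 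One small point to make explicit: for the counting on the finite group $H$ you need $\chi$ to factor through $H=\rho_{\fm}(\Gamma_{F(\zeta_p)})$; this follows from \cite{EG}*{4.1.3}, as the paper invokes just after the lemma, or alternatively one may run the identical argument on the image of $\rho_{\fm}\oplus\chi$.

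In short: the paper's proof is more elementary and uniform in $i$, trading structure for an explicit case analysis of cyclic $5$-tuples; yours isolates the genuinely hard case $i=3$ and resolves it via the $\bF_{2}[\bZ/5]$-module structure, which makes the obstruction transparent. Both reach the same contradiction $\psi=\psi^{g}$ in the degenerate sub-case.
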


\begin{proof}
Suppose $\rho$ is a character $\chi$. Then, $\rho_{\fm}\otimes \chi^{-1}$ satisfies, by Theorem \ref{Key} and Remark \ref{finite extension}, the following condition: for any $h\in H$, the multiplicity of $1$ in the multiset of eigenvalues of $(\rho_{\fm}\otimes\chi^{-1})(h)$ is $\ge 2$. 
Up to a permutation, the multiset of eigenvalues of $(\rho_{\fm}\otimes\chi^{-1})(h)$ has two possibilities:
\begin{center}
(a) $1,1,\alpha, \beta,\gamma$, \quad (b) $1, \alpha, 1, \beta, \gamma$, \quad for some $\alpha, \beta, \gamma\neq 0$.
\end{center}
Here, conjugation by $g$ acts by a cyclic permutation $(1\, 2\, 3\, 4\, 5)$. 

In case (a), consideration of $(\rho_{\fm}\otimes\chi^{-1})(h\cdot g^2 hg^{-2})$, $(\rho_{\fm}\otimes\chi^{-1})(h\cdot g^3 hg^{-3})$, gives $\beta=1$ or $\alpha=\gamma=1$. 
If $\beta=1$, consideration of $(\rho_{\fm}\otimes\chi^{-1})(h\cdot g hg^{-1})$, $(\rho_{\fm}\otimes\chi^{-1})(h\cdot g^4 hg^{-4})$ gives $\alpha=1$ or $\gamma=1$. So, the multiplicity of $1$ is $\ge 4$ for every $h\in H$ satisfying (a) up to a permutation. 

Similarly, in case (b), consideration of $(\rho_{\fm}\otimes\chi^{-1})(h\cdot g hg^{-1})$, $(\rho_{\fm}\otimes\chi^{-1})(h\cdot g^4 hg^{-4})$ gives $\alpha=1$ or $\beta=\gamma=1$. 
Since we are in case (i) as well if $\alpha=1$, we deduce that the multiplicity of $1$ is $\ge 4$ for \emph{every} $h\in H$. But, consideration of $(\rho_{\fm}\otimes\chi^{-1})(h\cdot g^i hg^{-i})$ with $i\neq 0$ gives that $(\rho_{\fm}\otimes\chi^{-1})(h)$ is the identity for every $h$. This is impossible.  
\end{proof}

Now, we may assume that $\rho$ is induced from a character of $H$. By \cite{EG}*{4.2.1} and a slight variant of \cite{EG}*{4.2.4 (1)} with the same proof, we deduce that $\det \rho=\det \rho_{\fm}$. 

Take $g' \in H$ such that $\rho (g')$ is not a scalar; such an element exists because  $\rho$ decomposes into distinct characters of $H$. 

\begin{lem}\label{final lemma}
Each eigenvalue of $g'$ has multiplicity $2$ or $3$ in the multiset of eigenvalues of $g'$. 
\end{lem}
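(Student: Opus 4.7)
The plan is to combine Theorem~\ref{Key} applied to $\rho$ (as in the arguments above) with the cyclic combinatorics of the induced decomposition $\rho|_H = \bigoplus_{j=0}^{4} \chi^{g^j}$.

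First, by Chebotarev (Remark~\ref{finite extension}) realize $g' = \rho_{\fm}(\Frob_{v'})$ for some place $v'$ of $F(\zeta_p)$; Theorem~\ref{Key} applied to the constituent $\rho$ then yields that each distinct eigenvalue of $\rho(g')$ occurs with multiplicity at least $5 - i$ in the multiset of eigenvalues of $\rho_{\fm}(g')$. Non-scalarity of $\rho(g')$ gives at least two distinct eigenvalues, so $2(5-i) \leq 5$. Combined with $i < 4$, this forces $i = 3$, and $\rho(g')$ has exactly two distinct eigenvalues $\alpha, \beta$, each occurring at least twice in $\rho_{\fm}(g')$. Hence the multiplicity partition of the eigenvalues of $\rho_{\fm}(g')$ is either $(3,2)$---which is the desired conclusion---or $(2,2,1)$ with a third eigenvalue $\gamma \notin \{\alpha, \beta\}$; the rest of the argument excludes the latter.

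Let $(a,b)$ with $a + b = 5$ and $a, b \geq 1$ denote the multiplicities of $\alpha, \beta$ in $\rho(g')$. If $(a,b) \in \{(2,3),(3,2)\}$, the identity $\det \rho = \det \rho_{\fm}$ (established just above) gives $\alpha^a \beta^b = \alpha^2 \beta^2 \gamma$, forcing $\gamma = \alpha^{a-2}\beta^{b-2} \in \{\alpha, \beta\}$, a contradiction.

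If $(a,b) \in \{(1,4),(4,1)\}$, the determinant alone is insufficient, and I would instead apply Theorem~\ref{Key} to the auxiliary element $g' d \in H$, where $d := g g' g^{-1}$. Setting $y_j := \chi^{g^j}(g')$ and $z_j := \psi^{g^j}(g')$, a direct calculation shows $\chi^{g^j}(g'd) = y_j y_{j-1}$ and $\psi^{g^j}(g'd) = z_j z_{j-1}$ (indices modulo $5$), so the eigenvalues of $\rho(g'd)$ and $\rho_{\fm}(g'd)$ are the cyclic products of consecutive entries of $(y_j)$ and $(z_j)$, respectively. After a $g$-conjugation of $g'$, we may take $(y_0, \ldots, y_4) = (\alpha, \beta, \beta, \beta, \beta)$ in the case $(a,b) = (1,4)$; the eigenvalues of $\rho(g'd)$ are then $\alpha\beta$ with multiplicity $2$ and $\beta^2$ with multiplicity $3$, and Theorem~\ref{Key} applied to $g'd$ forces $\beta^2$ to appear at least twice in $\rho_{\fm}(g'd)$. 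But $\beta^2 = z_{j-1} z_j$ requires two cyclically adjacent $\beta$'s, and in the $(2,2,1)$-case the multiset $\{z_0, \ldots, z_4\}$ contains only two $\beta$'s, hence at most one such adjacent pair---contradiction. The case $(a,b) = (4,1)$ is identical with $\alpha^2$ in place of $\beta^2$.

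I expect the main obstacle to be this last case $(a,b) \in \{(1,4),(4,1)\}$, where the determinant constraint vanishes; the essential move is to extract an additional Theorem~\ref{Key}-type inequality from the product $g' d$ and exploit the cyclic combinatorics of the induced decomposition.
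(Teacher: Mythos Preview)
Your treatment of the case $(a,b)\in\{(2,3),(3,2)\}$ via $\det\rho=\det\rho_{\fm}$ is correct and matches the paper. The gap is in the case $(a,b)\in\{(1,4),(4,1)\}$: the assertion ``$\beta^2=z_{j-1}z_j$ requires two cyclically adjacent $\beta$'s'' is false. The entries $z_j$ lie in $\{\alpha,\beta,\gamma\}$, and nothing you have written rules out $\alpha\gamma=\beta^2$ (the determinant relation only gives $\alpha^3=\beta^3$ in that situation). Concretely, take $\beta=\zeta_3\alpha$, $\gamma=\zeta_3^2\alpha$ and the cyclic arrangement $(z_0,\dots,z_4)=(\alpha,\gamma,\alpha,\beta,\beta)$. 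Then the multiset of consecutive products $z_jz_{j-1}$ is $\{\alpha\beta,\beta^2,\beta^2,\alpha\beta,\beta^2\}$, so both $\alpha\beta$ and $\beta^2$ occur with multiplicity $\ge 2$ in $\rho_{\fm}(g'd)$, and Theorem~\ref{Key} for $g'd$ yields no contradiction. Thus your auxiliary element $g'd$ is not strong enough.

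The paper's argument in this case uses instead $g'^2\cdot (g')^{g^i}$ for $i\neq 0$. The key preliminary observation (which you did not isolate) is that $\rho(h)$ has at most two distinct eigenvalues for \emph{every} $h\in H$; applying this to $h=g'^2\cdot(g')^{g^i}$, whose $\rho$-eigenvalues are $y_j^2y_{j-i}$, one sees that $\{\alpha^3,\alpha^2\beta,\alpha\beta^2\}$ cannot be three distinct values, which forces $\alpha^2=\beta^2$. Feeding $\alpha^2=\beta^2$ into the determinant identity then gives $\gamma=\alpha^2/\beta=\beta$, eliminating the $(2,2,1)$ possibility. The extra factor of $g'$ is exactly what collapses the ambiguity $\alpha\gamma=\beta^2$ that breaks your argument.
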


\begin{proof}
Let us first observe that $\rho(h)$ has at most two distinct eigenvalues for every $h \in H$ because, if $\rho (h)$ has three distinct eigenvalues, then $h$ must have $6$ eigenvalues by Theorem \ref{Key} and Remark \ref{finite extension}. 

If $\rho(g')$ has eigenvalues $\alpha, \alpha, \alpha, \beta, \beta$ with $\alpha\neq\beta$, then the multiplicities of $\alpha, \beta$ in the set of eigenvalues of $g'$ are both $\geq 2$, and using the equality of determinants, we see that $\rho(g')$ and $g'$ have the same characteristic polynomial. Otherwise, $\rho(g')$ has eigenvalues $\alpha, \alpha, \alpha, \alpha, \beta$ with $\alpha\neq\beta$. (Conjugation by $g$ acts by $(1\, 2\, 3\, 4\, 5)$ as before.)
By applying the observation above to $\rho(g'^2\cdot (g')^{g^i})$ with $i\neq 0$, we see that $\alpha^2=\beta^2$ holds. 
Using the equality of determinants, we deduce that $g'$ has eigenvalues $\alpha, \alpha, \beta, \beta, \beta$. This proves the lemma. 
\end{proof}

Let us complete the proof. 
By a permutation, we may assume that $\psi (g')=\psi^g (g')=\psi^{g^2}(g')$ or $\psi (g')=\psi^g (g')=\psi^{g^3}(g')$. In the former case, $g' \cdot g^2 g'g^{-2}$ has an eigenvalue $\psi (g')^2$ with multiplicity one. In the latter case, $g' \cdot gg'g^{-1}$ has an eigenvalue $\psi(g')^2$ with multiplicity one. These contradict to Lemma \ref{final lemma} with $g'$ replaced by the corresponding elements as $\rho (g' \cdot g^2 g'g^{-2}), \rho (g' \cdot gg'g^{-1})$ are not scalars. 

\begin{bibdiv}
\begin{biblist}
\bib{Boyer}{article}{
   author={Boyer, Pascal},
   title={Sur la torsion dans la cohomologie des vari\'{e}t\'{e}s de Shimura de
   Kottwitz-Harris-Taylor},
   language={French, with English and French summaries},
   journal={J. Inst. Math. Jussieu},
   volume={18},
   date={2019},
   number={3},
   pages={499--517},
   issn={1474-7480},
   review={\MR{3936639}},
   doi={10.1017/s1474748017000093},
}
\bib{CS}{article}{
   author={Caraiani, Ana},
   author={Scholze, Peter},
   title={On the generic part of the cohomology of compact unitary Shimura
   varieties},
   journal={Ann. of Math. (2)},
   volume={186},
   date={2017},
   number={3},
   pages={649--766},
   issn={0003-486X},
   review={\MR{3702677}},
   doi={10.4007/annals.2017.186.3.1},
}
\bib{CS2}{article}{
   author={Caraiani, Ana},
   author={Scholze, Peter},
   title={On the generic part of the cohomology of non-compact unitary Shimura
   varieties},
   eprint={https://arxiv.org/abs/1909.01898}, 
}
\bib{EG}{article}{
   author={Emerton, Matthew},
   author={Gee, Toby},
   title={$p$-adic Hodge-theoretic properties of \'etale cohomology with ${\rm
   mod}\, p$ coefficients, and the cohomology of Shimura varieties},
   journal={Algebra Number Theory},
   volume={9},
   date={2015},
   number={5},
   pages={1035--1088},
   issn={1937-0652},
   review={\MR{3365999}},
   doi={10.2140/ant.2015.9.1035},
}
\bib{Fujiwara}{article}{
   author={Fujiwara, Kazuhiro},
   title={Galois deformations and arithmetic geometry of Shimura varieties},
   conference={
      title={International Congress of Mathematicians. Vol. II},
   },
   book={
      publisher={Eur. Math. Soc., Z\"{u}rich},
   },
   date={2006},
   pages={347--371},
   review={\MR{2275601}},
}
\bib{GM}{article}{
   author={Guralnick, Robert},
   author={Malle, Gunter},
   title={Characteristic polynomials and fixed spaces of semisimple
   elements},
   conference={
      title={Recent developments in Lie algebras, groups and representation
      theory},
   },
   book={
      series={Proc. Sympos. Pure Math.},
      volume={86},
      publisher={Amer. Math. Soc., Providence, RI},
   },
   date={2012},
   pages={173--186},
   review={\MR{2977003}},
   doi={10.1090/pspum/086/1417},
}
\bib{HT}{book}{
   author={Harris, Michael},
   author={Taylor, Richard},
   title={The geometry and cohomology of some simple Shimura varieties},
   series={Annals of Mathematics Studies},
   volume={151},
   note={With an appendix by Vladimir G. Berkovich},
   publisher={Princeton University Press, Princeton, NJ},
   date={2001},
   pages={viii+276},
   isbn={0-691-09090-4},
   review={\MR{1876802}},
}
\bib{KS}{article}{
   author={Lan, Kai-Wen},
   author={Suh, Junecue},
   title={Vanishing theorems for torsion automorphic sheaves on compact
   PEL-type Shimura varieties},
   journal={Duke Math. J.},
   volume={161},
   date={2012},
   number={6},
   pages={1113--1170},
   issn={0012-7094},
   review={\MR{2913102}},
   doi={10.1215/00127094-1548452},
}
\bib{Scott}{article}{
   author={Scott, Leonard L.},
   title={Matrices and cohomology},
   journal={Ann. of Math. (2)},
   volume={105},
   date={1977},
   number={3},
   pages={473--492},
   issn={0003-486X},
   review={\MR{0447434}},
   doi={10.2307/1970920},
}
\bib{Shin:sc}{article}{
   author={Shin, Sug Woo},
   title={Supercuspidal part of the ${\rm mod}\,l$ cohomology of ${\rm
   GU}(1,n-1)$-Shimura varieties},
   journal={J. Reine Angew. Math.},
   volume={705},
   date={2015},
   pages={1--21},
   issn={0075-4102},
   review={\MR{3377388}},
   doi={10.1515/crelle-2013-0057},
}
\bib{Wedhorn}{article}{
   author={Wedhorn, Torsten},
   title={Congruence relations on some Shimura varieties},
   journal={J. Reine Angew. Math.},
   volume={524},
   date={2000},
   pages={43--71},
   issn={0075-4102},
   review={\MR{1770603}},
   doi={10.1515/crll.2000.060},
}
\end{biblist}
\end{bibdiv}

\end{document}